\newcommand{\bet}{\beta}
\newcommand{\eps}{\varepsilon}
\newcommand{\gam}{\gamma}
\newcommand{\kap}{\varkappa}
\newcommand{\Ome}{\Omega}
\newcommand{\Tht}{\Theta}
\newcommand{\F}{{\mathbb F}}
\newcommand{\cL}{{\mathcal L}}
\newcommand{\cP}{{\mathcal P}}
\newcommand{\fe}{{\mathfrak e}}
\newcommand{\lpr}{\left(}
\newcommand{\rpr}{\right)}
\newcommand{\lfl}{\left\lfloor}
\newcommand{\rfl}{\right\rfloor}
\newcommand{\lcl}{\left\lceil}
\newcommand{\rcl}{\right\rceil}
\renewcommand{\>}{\rangle}
\newcommand{\stm}{\setminus}
\newcommand{\seq}{\subseteq}
\newcommand{\est}{\varnothing}
\newcommand{\longc}{,\ldots,}
\newcommand{\longe}{=\dotsb=}
\newcommand{\longp}{+\dotsb+}
\newcommand{\longle}{\le\dotsb\le}
\newcommand{\Fr}[1][r]{\F_2^{#1}}
\newcommand{\sub}[1]{{_{\substack{#1}}}}
\DeclareMathOperator{\supp}{supp}
\DeclareMathOperator{\codim}{codim}
\newtheorem{lemma}{Lemma}
\newtheorem{theorem}{Theorem}
\newtheorem{corollary}{Corollary}
\newtheorem{primetheorem}{Theorem}
\newcommand{\refl}[1]{~\ref{l:#1}}
\newcommand{\reft}[1]{~\ref{t:#1}}
\newcommand{\refc}[1]{~\ref{c:#1}}
\newcommand{\refs}[1]{~\ref{s:#1}}
\newcommand{\refb}[1]{~\cite{b:#1}}
\newcommand{\refe}[1]{~\eqref{e:#1}}
\theoremstyle{remark}
\newtheorem*{remark}{Remark}
\author{Aart Blokhuis}
\email{A.Blokhuis@tue.nl}
\address{Department of Mathematics and Computer Science,
  Eindhoven University of Technology, Netherlands}
\author{Vsevolod F. Lev}
\email{seva@math.haifa.ac.il}
\address{Department of Mathematics, The University of Haifa at Oranim,
  Israel}
\title{Flat-containing and shift-blocking sets in $\Fr$}
\begin{document}
\baselineskip=16pt

\begin{abstract}
For non-negative integers $r\ge d$, how small can a subset $C\seq\Fr$ be,
given that for any $v\in\Fr$ there is a $d$-flat passing through $v$ and
contained in $C\cup\{v\}$? Equivalently, how large can a subset $B\seq\Fr$
be, given that for any $v\in\Fr$ there is a linear $d$-subspace not blocked
non-trivially by the translate $B+v$? A number of lower and upper bounds are
obtained.
\end{abstract}

\maketitle

\section{Introduction}\label{s:intro}

The well-known finite-field version of the Kakeya problem is to estimate the
smallest size of a subset of a finite vector space, containing a line in
every direction. A natural dual problem is to estimate the smallest size of a
subset containing a line \emph{through every point} of the space, with the
possible exception of the point itself. (The problem would become trivial had
we not excluded the anchor point from consideration. This can be considered
as an analogue of forbidding the zero difference in the definition of a
progression-free set.) More generally, given an integer $d$ one can consider
sets ``essentially containing'' a \emph{$d$-flat} through every point of the
space. This motivates the following definitions.

Let $V$ be a finite vector space and $d$ an integer with $0\le d\le\dim V$.
We say that a subset $C\seq V$ is \emph{$d$-complete} if for every $v\in V$
there exists a $d$-subspace $L_v\le V$ such that
  $$ v+(L_v\stm\{0\}) \seq C; $$
that is, through every point of $V$ passes a $d$-flat entirely contained in
$C$, save, perhaps, for the point itself. Equivalently, $C\seq V$ is
$d$-complete if any translate of $C$ contains all non-zero vectors of some
$d$-subspace. By $\gam_V(d)$ we denote the smallest possible size of a
$d$-complete subset $C\seq V$; alternatively, $\gam_V(d)$ is the smallest
possible size of a union of the form
  $$ \textstyle \bigcup_{v\in V} \big(v+(L_v\stm\{0\})\big), $$
for all families $\{L_v\colon v\in V\}$ of $d$-subspaces.

Clearly, a subset $C\seq V$ is $d$-complete if and only if its complement
$B:=V\stm C$ has the property that for every $v\in V$ there is a $d$-subspace
$L_v\le V$ with
  $$ \big(v+(L_v\stm\{0\})\big)\cap B = \est. $$
We call sets with this property \emph{non-blocking}; quantitatively speaking,
$B\seq V$ is $d$-non-blocking if through every point of $V$ passes a
co-$d$-flat disjoint with $B$, with the possible exception of the point
itself. Equivalently, $B\seq V$ is $d$-non-blocking if for any translate of
$B$ there is a co-$d$-subspace of $V$, avoiding all non-zero points of the
translate. We denote by $\bet_V(d)$ the largest possible size of a
$d$-non-blocking subset $B\seq V$.

The significance of the quantity $\bet_V(d)$ lies in the fact that every
subset $B\seq V$ of size $|B|>\bet_V(d)$ is guaranteed to have a translate
blocking (that is, having non-zero and non-empty intersection with) all
co-$d$-subspaces of $V$.

Writing $r:=\dim V$, from the discussion above we have
\begin{equation}\label{e:bet-gam}
  \bet_V(d) = |V|-\gam_V(r-d)
\end{equation}
for all $0\le d\le r$. In view of this basic relation, our results can be
equivalently stated in terms of either of the quantities $\gam_V$ and
$\bet_V$. We do not follow any strong rule in this respect, each time
choosing whatever seems more natural to us. In some cases, two restatements
are given; in most other cases the result is stated in terms of $\gam_V$ if
it is of primary interest for flats of low dimension, and in terms of
$\bet_V$ when it is mostly interesting for flats of low \emph{co}-dimension.

It is straightforward to verify that for every finite vector space $V$ of
dimension $r:=\dim V\ge 1$ we have
\begin{equation}\label{e:mono-gamma}
  0 = \gam_V(0) < \gam_V(1) \longle \gam_V(r-1) < \gam_V(r) = |V|;
\end{equation}
equivalently,
\begin{equation}\label{e:mono-beta}
  0 = \bet_V(0) < \bet_V(1) \longle \bet_V(r-1) < \bet_V(r) = |V|.
\end{equation}

In what follows we confine ourselves to the situation where $V$ is a vector
space over the two-element field $\F_2$. We denote the $r$-dimensional vector
space over this field by $\Fr$, and we abbreviate $\gam_{\Fr}(d)$ as
$\gam_r(d)$, and $\bet_{\Fr}(d)$ as $\bet_r(d)$.

We present our results in three blocks. In Section~\refs{basic} we make some
basic observations and in particular, find $\gam_r(1)$ and $\bet_r(1)$ (hence
also $\gam_r(r-1)$ and $\bet_r(r-1)$, cf.~\refe{bet-gam}), and determine
$\gam_r(2)$ up to a multiplicative factor; the very short proofs are also
included in Section~\refs{basic}. Non-existence results showing that complete
sets are large (and accordingly, non-blocking sets are small) are presented
in Section~\refs{summaryI}. Section~\refs{summaryII} lists a number of
upper-bound estimates for $\gam_r$ (hence, lower-bound estimates for
$\bet_r$), based on specific constructions of complete and non-blocking sets.

The proofs of the results discussed in Sections~\refs{summaryI} and
\refs{summaryII} are given in Sections~\refs{proofsI} and \refs{proofsII},
respectively.

\section{Motivation and acknowledgement}

Our initial motivation came from the following problem raised by Ernie Croot
(personal communication with the second-named author). Suppose that to each
$v\in\Fr$ there corresponds a subset $A_v\seq\Fr$, and write $2\cdot A_v$ for
the set of all non-zero elements of $\Fr$, representable as a sum of two
elements of $A_v$. Given that all sets $A_v$ are large, how large must the
union $C:=\cup_{v\in\Fr}(v+2\cdot A_v)$ be? Does there exist a constant $c>1$
such that if $|A_v|>2^r/r^c$, then $C$ contains all but at most $2^r/r^c$
elements of $\Fr$? In the special case where all $A_v$ are actually affine
subspaces of $\Fr$, this question can be restated in our present terms: is it
true that if $d<c\log_2 r$, then $\bet_r(d)<2^r/r^c$? The reader will easily
check that Theorem \reft{beta-upper} below yields much stronger estimates:
say, we have $\bet_r(d)<2^{0.85r}$ whenever $d<0.15r$. However, the general
case where $A_v$ are arbitrary sets (not necessarily affine subspaces) cannot
be treated with our present approach.

We are grateful to Ernie Croot for bringing this problem to our attention.

\section{Basic observations: lines, hyperplanes, and $2$-flats}\label{s:basic}

The quantities $\gam_r(1)$ and $\bet_r(1)$ and, consequently, $\gam_r(r-1)$
and $\bet_r(r-1)$, are easy to determine.

\begin{theorem}\label{t:gamma1}
For every integer $r\ge 1$ we have $\gam_r(1)=\bet_r(1)=2$. Hence,
$\bet_r(r-1)=\gam_r(r-1)=2^r-2$.
\end{theorem}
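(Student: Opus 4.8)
The plan is to split the statement into two independent computations, $\gam_r(1)=2$ and $\gam_r(r-1)=2^r-2$, and to read off the two $\bet$-values from the basic relation\refe{bet-gam}. Setting $d=r-1$ there gives $\bet_r(r-1)=2^r-\gam_r(1)$, while $d=1$ gives $\bet_r(1)=2^r-\gam_r(r-1)$; thus all four equalities follow once the two $\gam$-values are in hand. The case $r=1$ is degenerate ($\gam_1(0)=0=2^1-2$ and $\gam_1(1)=2$), so I assume $r\ge2$ wherever hyperplanes are involved.

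The value $\gam_r(1)$ is immediate. A $1$-subspace of $\Fr$ is $\{0,w\}$ with $w\ne0$, so $v+(L_v\stm\{0\})=\{v+w\}$, and $C$ is $1$-complete precisely when for every $v\in\Fr$ there is some $c\in C$ with $c\ne v$. Any two-element set has this property, whereas a singleton $\{c\}$ fails at $v=c$ and the empty set fails everywhere; hence $\gam_r(1)=2$, and therefore $\bet_r(r-1)=2^r-2$.

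For $\gam_r(r-1)$ I would pass to the complement $B:=\Fr\stm C$. Unwinding the definition, $C$ is $(r-1)$-complete at $v$ iff some linear hyperplane $L$ satisfies $(v+L)\cap B\seq\{v\}$; writing $S_v:=(B+v)\stm\{0\}$, this means that some hyperplane $L=\ker\phi$ avoids $S_v$, i.e.\ that some nonzero functional $\phi$ equals $1$ on all of $S_v$. For the upper bound take $|B|=2$: then each $S_v$ consists of at most two distinct nonzero vectors, and since distinct nonzero vectors over $\F_2$ are linearly independent, a functional taking the value $1$ on both always exists; so $C$ is $(r-1)$-complete and $\gam_r(r-1)\le2^r-2$.

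The matching lower bound is the step I expect to be the main obstacle: I must show that every $B$ with $|B|\ge3$ forces $C$ to fail at some point. Here I would use the $\F_2$-specific fact that three distinct points span a $2$-flat $F=\{b_1,b_2,b_3,b_1+b_2+b_3\}$, since in $\Fr$ a line contains only two points. At least three of the four points of $F$ lie in $B$, so I may choose $v\in F$ with $F\stm\{v\}\seq B$; then $W:=F+v$ is a $2$-dimensional \emph{linear} subspace whose three nonzero vectors all lie in $S_v$. Since $\dim(L\cap W)\ge\dim L+\dim W-r=1$ for every hyperplane $L$, each such $L$ contains a nonzero vector of $W$ and hence meets $S_v$; so no functional is identically $1$ on $S_v$, and $C$ is not $(r-1)$-complete at $v$. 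This gives $\gam_r(r-1)\ge2^r-2$, hence $\gam_r(r-1)=2^r-2$, and finally $\bet_r(1)=2^r-\gam_r(r-1)=2$.
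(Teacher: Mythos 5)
Your proof is correct and is essentially the paper's argument in dual clothing: your $2$-dimensional subspace $W$ with nonzero vectors $b_1+b_2,\,b_2+b_3,\,b_3+b_1$ is exactly the translate $(b_1+b_2+b_3)+B$ that the paper uses, and your dimension count $\dim(L\cap W)\ge 1$ is just a repackaging of the paper's observation that three vectors summing to zero cannot all lie off a linear hyperplane. The only cosmetic difference is that you compute $\gam_r(r-1)$ via the complement while the paper computes $\bet_r(1)$ directly, which is the same computation under the duality\refe{bet-gam}.
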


\begin{proof}
The equality $\gam_r(1)=2$ follows from the observation that a singleton set
does not contain a $1$-flat passing through its unique element, whereas for
any two-element set $C\seq\Fr$ and any $v\in\Fr$, there is $1$-flat passing
through $v$ and contained in $C\cup\{v\}$.

To find $\bet_r(1)$ we first notice that for any two-element set $B\seq\Fr$
there is a linear co-$1$-subspace, disjoint from $B\stm\{0\}$; hence
$\bet_r(1)\ge 2$. On the other hand, if $B=\{b_1,b_2,b_3\}\seq\Fr$ is a
three-element set, then the translate
$(b_1+b_2+b_3)+B=\{b_1+b_2,b_2+b_3,b_3+b_1\}$ blocks every linear
co-$1$-subspace: for, the vectors $b_1+b_2,\,b_2+b_3$, and $b_3+b_1$ add up
to $0$, and therefore they are not simultaneously contained in the complement
of a linear co-$1$-subspace. Thus, $\bet_r(1)\le 2$, and it follows that,
indeed, $\bet_r(1)=2$.
\end{proof}

To estimate $\gam_r(2)$ we remark that if every element of $\Fr$ is a sum of
three pairwise distinct elements of a set $S\seq\Fr$, then $\binom{|S|}3\ge
2^r$, whence $|S|>\sqrt[3]6\cdot2^{r/3}$. On the other hand, sets $S$ of size
$|S|=O(2^{r/3})$, with the property just mentioned, are known to exist: see,
for instance, \cite[Theorem 5.4.28]{b:chll}, or consider a decomposition of
$\Fr$ into the direct sum of three subspaces of roughly equal dimension and
take $S$ to be their union.
\begin{theorem}\label{t:gamma2}
If $r\ge 2$, then $\gam_r(2)$ is the smallest cardinality of a subset
$C\seq\Fr$ with the property that every element of $\Fr$ is representable as
a sum of three pairwise distinct elements from $C$. Consequently,
$\gam_r(2)=\Tht\big(2^{r/3}\big)$.
\end{theorem}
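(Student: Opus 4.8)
The plan is to set up an exact correspondence between the $2$-flats through a point $v$ lying in $C\cup\{v\}$ and the representations of $v$ as a sum of three pairwise distinct elements of $C$; once this is in hand the characterization of $\gam_r(2)$ is immediate, and the asymptotic estimate follows from the two counting observations recorded just before the statement.

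First I would analyse the contribution of a single $2$-flat. A $2$-subspace $L\le\Fr$ has the form $\{0,x,y,x+y\}$, so $L\stm\{0\}$ is a set of three distinct nonzero vectors, and its translate $v+(L\stm\{0\})=\{v+x,\,v+y,\,v+x+y\}$ likewise consists of three distinct vectors. Working over $\F_2$, where $2x=0$ and $3v=v$, these three vectors sum to $(v+x)+(v+y)+(v+x+y)=v$. Hence, whenever $C$ is $2$-complete with witness $L_v$ for a given $v$, the set $v+(L_v\stm\{0\})$ supplies three pairwise distinct elements of $C$ summing to $v$.

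For the converse I would begin with pairwise distinct $a,b,c\in C$ satisfying $a+b+c=v$ and put $x:=a+v$, $y:=b+v$, so that $v+x=a$, $v+y=b$, and $v+x+y=v+a+b=c$. The point I expect to require care is checking that $L:=\langle x,y\rangle$ is genuinely two-dimensional, i.e.\ that $x$, $y$, and $x+y$ are all nonzero. Here the $\F_2$ arithmetic does the work for free: if $a=v$ then $b+c=0$ forces $b=c$, contradicting distinctness, so in fact $a,b,c\ne v$ and thus $x,y\ne0$, while $x+y=a+b\ne0$ since $a\ne b$. Consequently $v+(L\stm\{0\})=\{a,b,c\}\seq C$. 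This is the crux of the argument: the pairwise-distinctness hypothesis silently guarantees $a,b,c\ne v$, which is exactly what is needed for the flat to be honestly $2$-dimensional, and it is why the clean condition in the statement carries no extra constraint. Combining the two directions shows that $C$ is $2$-complete precisely when every $v\in\Fr$ is a sum of three pairwise distinct elements of $C$, giving the first assertion.

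The estimate $\gam_r(2)=\Tht(2^{r/3})$ then follows from the remarks preceding the statement. For the lower bound, a set $C$ in which every vector is such a sum must satisfy $\binom{|C|}3\ge2^r$, whence $|C|>\sqrt[3]6\cdot2^{r/3}$. For the upper bound I would take $\Fr=V_1\oplus V_2\oplus V_3$ with $\dim V_i$ roughly $r/3$ and each at least $2$, and set $C=V_1\cup V_2\cup V_3$, of size $O(2^{r/3})$. A generic $v=v_1+v_2+v_3$ is represented by its three components, which are pairwise distinct unless two of them vanish; the remaining vectors---those lying in a single $V_i$, together with $0$---are written as a sum of three distinct elements of one summand, which is possible exactly because each $\dim V_i\ge2$. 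Since the only thing to watch is pairwise distinctness in these degenerate cases, I expect this part to be routine rather than the main difficulty.
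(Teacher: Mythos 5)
Your proposal is correct and follows essentially the same route as the paper: the paper's one-line proof is precisely your observation that $\{v,c_1,c_2,c_3\}$ form a $2$-flat if and only if $v=c_1+c_2+c_3$ with $c_1,c_2,c_3$ pairwise distinct, the distinctness of $v$ from the $c_i$ being automatic over $\F_2$ --- the point you correctly flag as the crux. The asymptotic estimate is likewise obtained exactly as in the remark preceding the theorem, via the bound $\binom{|C|}{3}\ge 2^r$ and the union of three direct summands of dimension roughly $r/3$.
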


\begin{proof}
Just notice that for given vectors $v,c_1,c_2,c_3\in\Fr$ to form a $2$-flat
it is necessary and sufficient that $v=c_1+c_2+c_3$ and $c_1,c_2,c_3$ are
pairwise distinct.
\end{proof}

An interesting property of the quantity $\gam_r(d)$ is that for any fixed
value of $d$, it is sub-multiplicative in $r$.
\begin{lemma}\label{l:sub-mult}
For any integer $r_1,r_2\ge d\ge 0$ we have
  $$ \gam_{r_1+r_2}(d) \le \gam_{r_1}(d) \gam_{r_2}(d). $$
\end{lemma}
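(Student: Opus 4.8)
The plan is to construct a $d$-complete subset of $\Fr[r_1+r_2]$ as a Cartesian product of optimal $d$-complete subsets of the two factor spaces. I would identify $\Fr[r_1+r_2]$ with the direct sum $\Fr[r_1]\oplus\Fr[r_2]$ and fix $d$-complete sets $C_1\seq\Fr[r_1]$ and $C_2\seq\Fr[r_2]$ attaining $|C_1|=\gam_{r_1}(d)$ and $|C_2|=\gam_{r_2}(d)$ (these exist since $r_1,r_2\ge d$). Setting $C:=C_1\times C_2$ gives $|C|=\gam_{r_1}(d)\gam_{r_2}(d)$, so it suffices to prove that $C$ is $d$-complete; this is exactly the asserted inequality.

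For the completeness check I would fix $v=(v_1,v_2)\in\Fr[r_1]\oplus\Fr[r_2]$ and invoke the completeness of each factor to obtain $d$-subspaces $L_1\le\Fr[r_1]$ and $L_2\le\Fr[r_2]$ with $v_1+(L_1\stm\{0\})\seq C_1$ and $v_2+(L_2\stm\{0\})\seq C_2$. The difficulty is that these give a $2d$-dimensional product subspace, whereas I need a single $d$-subspace of the sum. The resolution, and the crux of the argument, is to glue $L_1$ and $L_2$ along the graph of a linear isomorphism: picking any isomorphism $\phi\colon L_1\to L_2$, I would take $L:=\{(x,\phi(x))\colon x\in L_1\}$, which is a $d$-subspace of $\Fr[r_1+r_2]$. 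Since $\phi$ carries non-zero vectors to non-zero vectors, for each $x\in L_1\stm\{0\}$ both coordinate conditions hold at once, giving $v+(x,\phi(x))=(v_1+x,\,v_2+\phi(x))\in C_1\times C_2$; letting $x$ run over $L_1\stm\{0\}$ yields $v+(L\stm\{0\})\seq C$, as required.

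The only real obstacle is this passage from two independent $d$-subspaces to a single diagonal $d$-subspace, and it is handled entirely by the graph construction, which relies on nothing more than the fact that any two $d$-dimensional $\F_2$-spaces are isomorphic and that isomorphisms preserve non-vanishing. The degenerate case $d=0$ needs no separate treatment, since then $\gam_{r}(0)=0$ and both sides vanish. The argument is purely structural, involving no estimates.
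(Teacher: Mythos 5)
Your proof is correct and is essentially the paper's own argument: the paper likewise takes the product set $C_1+C_2$ under a direct sum decomposition and builds the connecting $d$-subspace as $\<e_1+f_1\longc e_d+f_d\>$ for bases $\{e_i\}$, $\{f_i\}$ of the two linear $d$-spaces, which is exactly the graph of the isomorphism $e_i\mapsto f_i$ that you describe abstractly.
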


\begin{proof}
Let $r:=r_1+r_2$ and write $\Fr=V_1\oplus V_2$, where $\dim V_1=r_1$ and
$\dim V_2=r_2$. Find $C_1\seq V_1$ and $C_2\seq V_2$ such that for
$i\in\{1,2\}$ we have $|C_i|=\gam_{r_i}(d)$ and $C_i$ is $d$-complete in
$V_i$. We claim that $C_1+C_2$ is $d$-complete in $\Fr$, so that
  $$ \gam_{r_1+r_2}(d)\le |C_1+C_2|=|C_1||C_2|=\gam_{r_1}(d)\gam_{r_2}(d). $$
To see this, fix $v\in\Fr$, write $v=v_1+v_2$ with $v_1\in V_1$ and
 $v_2\in V_2$, find $d$-flats $F_1\seq V_1$ and $F_2\seq V_2$ such that
$v_i\in F_i\seq C_i\cup\{v_i\}$ for $i\in\{1,2\}$, and select arbitrarily
bases $\{e_1\longc e_d\}$ and $\{f_1\longc f_d\}$ of the linear $d$-spaces
$v_1+F_1$ and $v_2+F_2$, respectively. Then all points of the $d$-flat
$v+\<e_1+f_1\longc e_d+f_d\>$, other than $v$, are contained in $C_1+C_2$.
\end{proof}

Using a standard argument, it is easy to derive from Lemma \refl{sub-mult}
that to any fixed $d\ge 1$ there corresponds some $\kap_d\in[0,1]$ such that
$\gam_r(d)=2^{(\kap_d+o(1))r}$ as $r\to\infty$. As it follows from Theorems
\reft{gamma1} and \reft{gamma2}, we have $\kap_1=0$ and $\kap_2=1/3$. For
$d\ge 3$ the precise values of $\kap_d$ are not known to us, but we will see
that $3/8\le \kap_3\le 3/7$ (Theorems~\reft{gamma-lower}
and~\reft{gamma-upper}), and that $\kap_d<1/2$ for all $d$
(Theorem~\reft{gamma-upper}).

\section{Non-existence results:
  lower bounds for $\gam_r$, upper bounds for $\bet_r$}\label{s:summaryI}

By Theorem \reft{gamma2} and \refe{mono-gamma}, we have
$\gam_r(3)=\Ome(2^{r/3})$. The following theorem presents an improvement of
this estimate.
\begin{theorem}\label{t:gamma-lower}
If $r\ge 15$ is an integer, then
  $$ \gam_r(3) > c\cdot 2^{3r/8}, $$
where $c=(16464)^{1/8}\approx 3.3656$.
\end{theorem}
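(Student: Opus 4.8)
The plan is to work with the equivalent ``punctured $3$-space'' description of completeness. For each $v\in\Fr$ I would fix a $3$-flat witnessing $3$-completeness and subtract $v$: its seven non-$v$ points become $v+(L_v\stm\{0\})$ for some linear $3$-subspace $L_v$, a set of seven points of $C$ carrying the structure of the Fano plane. Each of the seven $2$-subspaces $M\le L_v$ then yields a line $v+(M\stm\{0\})\seq C$, i.e.\ a triple of distinct points of $C$ summing to $v$; and each of the seven complementary cosets yields a four-point $2$-flat (a quadruple of distinct points of $C$ summing to $0$) contained in $C$. The first observation already gives the ``trivial'' estimate: every $v$ is a sum of three distinct elements of $C$ in at least seven ways, so $\binom{|C|}3\ge 7\cdot 2^r$ and hence $\gam_r(3)\gg 2^{r/3}$. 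The whole difficulty of the theorem is to improve the exponent from $1/3$ to $3/8$, so the argument must exploit the rigidity of the punctured $3$-spaces, and not merely the triples they contain.

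Next I would record what this rigidity buys. A $3$-complete set of size close to $2^{r/3}$ would force the seven representations $v=c_1+c_2+c_3$ to be essentially the only ones and, at the same time, to assemble into genuine affine Fano planes for every $v$; these are competing demands, and the aim is to show they cannot both be met by a small set. Concretely, I would set up a double count of an $8$-point configuration --- the natural candidates being a full punctured $3$-space together with one further point of $C$, or a pair of complementary $2$-flats sharing their two common points --- chosen so that completeness forces the count to be of order $2^{3r}$, while membership in $C^8$ bounds it above by $|C|^8$. The matching of the constant $16464=2^4\cdot 3\cdot 7^3$ should then fall out of bookkeeping orderings and configuration automorphisms; note the three factors of $7$ and recall that the ambient space carries $\sim 2^{3r}/24$ two-flats and $\sim 2^{3r}/168$ linear $3$-subspaces, so the numbers $24=2^3\cdot3$ and $168=2^3\cdot3\cdot7$ that governed the trivial bound are exactly the ones that must reappear.

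The step I expect to be the main obstacle is precisely the passage from a $\Theta(2^r)$ lower bound --- all that ``one flat per point'' gives directly, and which only ever yields exponents in $[1/4,1/3]$ --- to the $\Theta(2^{3r})$ lower bound needed for exponent $3/8$. I would attack this through a dichotomy on how concentrated the directions $L_v$ are: if few $3$-subspaces occur as directions, then the cosets they determine are almost entirely filled by $C$, forcing $|C|$ to be large by a direct count of those cosets; whereas if the directions are spread out, then the complementary $2$-flats attached to distinct centres are nearly disjoint as configurations, so their total multiplicity is controlled and a Cauchy--Schwarz (second-moment) estimate on the incidences between points of $C$ and the punctured $3$-spaces boosts the count to the required order. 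Optimising this trade-off should produce both the exponent $3/8$ and the precise constant, and the nondegeneracy required to make the configurations genuinely distinct (so that the automorphism counts are exact) is the likely origin of the hypothesis $r\ge 15$. The routine but delicate work will be controlling the overlaps, so that no configuration is overcounted by more than the claimed factor $16464$.
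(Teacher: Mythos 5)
Your opening paragraph is sound (the seven lines through $v$ inside its witnessing flat do give $\binom{|C|}3\ge 7\cdot 2^r$), and you have correctly reverse-engineered the shape of the target inequality, $|C|^8\gtrsim 16464\cdot 2^{3r}$. But the entire content of the theorem is the mechanism that produces an inequality of this shape, and that mechanism is exactly what your proposal does not supply. As you yourself observe, completeness visibly forces only $O(2^r)$ punctured $3$-spaces, so your configuration count starts at $\Theta(2^r)$, not $\Theta(2^{3r})$; the dichotomy you offer to bridge this gap is not carried out and its key assertion fails as stated. In the ``spread-out'' branch, nothing in $3$-completeness prevents many distinct centres $v$ from sharing the same direction subspace, or from sharing six of the seven $C$-points of their flats, so the claimed near-disjointness of configurations is unjustified; no quantitative threshold separating the two branches is exhibited, and the Cauchy--Schwarz step is never set up. Your two structural guesses are also off: the hypothesis $r\ge 15$ has nothing to do with nondegeneracy of configurations, and the constant does not arise from automorphism counts.

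For contrast, the paper's argument is local and requires no dichotomy on directions. Write $B:=\Fr\stm C$, let $\nu(s)$ be the number of representations of $s$ as a sum of two distinct elements of $C$, and let $B(s)$ be the set of $b\in B$ whose direction subspace $b+F_b$ contains $s$. Each $b\in B$ lies in exactly seven sets $B(s)$, so $\sum_s|B(s)|=7|B|$. The crucial point is a pair of complementary upper bounds on each $|B(s)|$: first, $s$ has three representations $c_1+c_2$ with $c_1,c_2\in F_b\cap C$, and these three pairs determine $F_b$ (six distinct points cannot lie in a $2$-flat) and hence $b$ (the unique non-$C$ point of $F_b$), giving $|B(s)|\le\binom{\nu(s)}3<\nu(s)^3/6$; second, $b\mapsto b+s$ injects $B(s)$ into $C$, giving $|B(s)|\le|C|$. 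Taking the geometric mean with weights $1/3$ and $2/3$ and summing over $s$, using $\sum_s\nu(s)=\binom{|C|}2$, yields $7(2^r-|C|)<6^{-1/3}|C|^{2/3}\binom{|C|}2$, and the theorem follows by splitting according to whether $|C|\ge 2^{r/2}$. This also explains the two numbers you tried to interpret: $r\ge 15$ is needed only so that $2^{r/2}>16464^{1/8}\cdot 2^{3r/8}$, i.e.\ $2^r>16464$; and $16464=6\cdot 14^3$ comes from the factor $1/6$ in $\binom{\nu}3<\nu^3/6$ together with the $14=7\cdot 2$ arising from the seven-fold count and $\binom{|C|}2<|C|^2/2$ --- not from the orders $24$ and $168$ of flat stabilizers. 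Your proposal is a research plan whose central step is missing, and the route it suggests for that step does not go through as described.
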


The argument employed in the proof of Theorem \reft{gamma-lower} (see Section
\refs{proofsI}) can also be used to estimate $\gam_r(3)$ non-trivially for
$3\le r\le 14$; say, it is easy to derive from \refe{gamma-lower1} that
$\gam_r(3)>2^{r/2}$ for every such $r$. The only reason to confine to
 $r\ge 15$ is that this allows us to keep the coefficient
$c$ reasonably large.

\begin{corollary}\label{c:gamma3lower}
For integer $r\ge d\ge 3$, we have $\gam_r(d)=\Ome(2^{3r/8})$ with an
absolute implicit constant.
\end{corollary}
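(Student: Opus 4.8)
The plan is to reduce the statement to the already established bound for $d=3$ by monotonicity in $d$, and then to dispose of the finitely many small values of $r$ by folding them into the implicit constant.

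First I would invoke the monotonicity relation \refe{mono-gamma}, which asserts that $\gam_r$ is nondecreasing in $d$. In particular, for every $d$ with $3\le d\le r$ this gives
$$ \gam_r(d)\ge\gam_r(3), $$
so it suffices to bound $\gam_r(3)$ from below by $\Ome(2^{3r/8})$ uniformly in $r\ge 3$. For $r\ge 15$ this is precisely the content of Theorem \reft{gamma-lower}, which yields $\gam_r(3)>c\cdot 2^{3r/8}$ with $c=(16464)^{1/8}$.

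It then remains to cover the range $3\le r\le 14$. Here I would simply observe that there are only finitely many admissible pairs $(r,d)$, and that for each of them $\gam_r(3)$ is a fixed positive integer; indeed, by the remark following Theorem \reft{gamma-lower} one even has $\gam_r(3)>2^{r/2}$ throughout this range. Consequently the quantity
$$ c_0:=\min_{3\le r\le 14}\ \gam_r(3)\,2^{-3r/8} $$
is a strictly positive absolute constant, and $\gam_r(d)\ge\gam_r(3)\ge c_0\cdot 2^{3r/8}$ for all such $r$.

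Combining the two ranges, I expect to conclude that $\gam_r(d)\ge\min\{c,c_0\}\cdot 2^{3r/8}$ for all integers $r\ge d\ge 3$, with the absolute implicit constant $\min\{c,c_0\}>0$. I do not anticipate any genuine obstacle in this argument: the only point requiring a moment's care is verifying that the small-dimension cases contribute a positive multiplicative constant, and this follows at once from the positivity of each $\gam_r(3)$.
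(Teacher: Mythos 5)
Your proof is correct and is essentially the paper's own (implicit) argument: the corollary is stated without proof precisely because it follows from the monotonicity relation \eqref{e:mono-gamma} (giving $\gam_r(d)\ge\gam_r(3)$ for $d\ge 3$), Theorem~\ref{t:gamma-lower} for $r\ge 15$, and the remark preceding the corollary, which handles the finitely many cases $3\le r\le 14$ via $\gam_r(3)>2^{r/2}\ge 2^{3r/8}$. Your compactness-style treatment of the small-$r$ range (taking the minimum of the finitely many positive ratios $\gam_r(3)2^{-3r/8}$) is a harmless variant of that remark, so nothing is missing.
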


Recall, that the entropy function is defined by
  $$ H(x) := -x\ln x-(1-x)\ln(1-x),\ 0<x<1, $$
and that
\begin{equation}\label{e:binomial}
  \frac1{\sqrt{2r}} \: e^{rH(d/r)} \le \binom rd
                                < \sum_{j=0}^d \binom rj \le e^{rH(d/r)}
\end{equation}
for all integer $1\le d\le r/2$; this follows easily, for instance, from
\cite[Ch. 10,~\S 11, Lemmas~7 and~8]{b:mcwsl}. (Although this is not used
below, we remark that the expression in the right-hand side of
\refe{binomial} can be given a nice symmetrical form; namely,
$e^{rH(d/r)}=r^r/(d^d(r-d)^{r-d})$.)

Using \refe{binomial}, it is easy to verify that our next theorem improves
Corollary \refc{gamma3lower} for flats of dimension $d\gtrsim 0.073r$ (by
which we mean $d>(\kap+o(1))r$ with an absolute constant
 $\kap\approx 0.073$).

\begin{theorem}\label{t:beta-upper}
For integer $r\ge d\ge 0$ we have
\begin{equation}\label{e:gamma-binomial}
  \gam_r(d) \ge \sum_{j=0}^{d-1} \binom rj.
\end{equation}
Equivalently,
\begin{equation}\label{e:beta-binomial}
  \bet_r(d) \le \sum_{j=0}^d \binom rj.
\end{equation}
\end{theorem}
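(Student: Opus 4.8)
The plan is to use the polynomial method, establishing the inequality \refe{gamma-binomial} for $\gam_r(d)$ directly and then deducing \refe{beta-binomial} from the basic relation \refe{bet-gam}. The right-hand side $\sum_{j=0}^{d-1}\binom rj$ is exactly the dimension of the space $\cP$ of multilinear polynomials over $\F_2$ in the $r$ coordinate variables of degree at most $d-1$ (recall that over $\F_2$ every function $\Fr\to\F_2$ is uniquely represented by such a polynomial). I would fix an arbitrary $d$-complete set $C\seq\Fr$ and show that the evaluation map $\cP\to\F_2^C$, sending each polynomial to the tuple of its values on the points of $C$, is injective. Since $\F_2^C$ has dimension $|C|$, injectivity forces $|C|\ge\dim\cP=\sum_{j=0}^{d-1}\binom rj$; taking $C$ of minimal size then gives \refe{gamma-binomial}.

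The engine of the argument is the standard fact that a polynomial of degree at most $d-1$ sums to zero over any $d$-flat. For a $d$-flat $F=v+\<e_1\longc e_d\>$ one has $\sum_{x\in F}p(x)=(\Delta_{e_1}\dotsm\Delta_{e_d}p)(v)$, where $\Delta_e p(x):=p(x+e)+p(x)$ is the discrete derivative in direction $e$; these derivatives commute and the identity follows by a one-line induction on $d$. Each derivative lowers the degree by at least one, so applying $d$ of them to a polynomial of degree $\le d-1$ yields the zero polynomial. I would record this as a short preliminary observation.

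To prove injectivity, suppose $p\in\cP$ is nonzero yet vanishes on $C$, and pick $v\in\Fr$ with $p(v)=1$ (such $v$ exists because a nonzero multilinear polynomial is a nonzero function). Here is where $d$-completeness enters: it supplies a $d$-flat $F=v+L_v$ through $v$ with $F\stm\{v\}=v+(L_v\stm\{0\})\seq C$. Summing $p$ over $F$ and using that $p$ annihilates every point of $C$, all terms except the one at the anchor $v$ vanish, so $0=\sum_{x\in F}p(x)=p(v)=1$, a contradiction. The one subtle point---and the only place where the defining feature of the problem is used---is precisely that $v$ itself is the single point of $F$ permitted to lie outside $C$, which is exactly why the flat-sum collapses to $p(v)$. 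I expect this to be the conceptual heart of the proof, while the flat-summation lemma is the main technical ingredient; neither presents a genuine obstacle once set up correctly.

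The remaining step is routine bookkeeping. Injectivity of a linear map from $\cP$ into the $|C|$-dimensional space $\F_2^C$ gives $\dim\cP\le|C|$, proving \refe{gamma-binomial}. For \refe{beta-binomial} I would apply \refe{gamma-binomial} with $r-d$ in place of $d$ and substitute into $\bet_r(d)=2^r-\gam_r(r-d)$ from \refe{bet-gam}; writing $2^r=\sum_{j=0}^r\binom rj$ and using $\binom rj=\binom r{r-j}$ turns $2^r-\sum_{j=0}^{r-d-1}\binom rj$ into $\sum_{j=0}^{d}\binom rj$, as required.
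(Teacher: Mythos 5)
Your proposal is correct and follows essentially the same route as the paper's first proof: a dimension count yields a nonzero polynomial of degree at most $d-1$ vanishing on the $d$-complete set $C$, and $d$-completeness together with the fact that such a polynomial sums to zero over any $d$-flat forces it to vanish identically, contradicting the fact that a nonzero multilinear polynomial is a nonzero function on $\Fr$. The only cosmetic differences are that you establish the flat-sum lemma via discrete derivatives where the paper uses a monomial-parity argument (its Fact~1, applied after parametrizing the flat), and that you pass from \refe{gamma-binomial} to \refe{beta-binomial} explicitly via \refe{bet-gam}, which the paper treats as an immediate equivalence.
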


In Section \refs{proofsI}, two proofs of Theorem \reft{beta-upper} are given.
Elaborating on one of them, we will also establish the following slight
refinement.
\begin{primetheorem}\label{t:beta-upper-strong}
For integer $r\ge d\ge 0$ we have
  $$ (1-2^{d-r})\bet_r(d) \le \sum_{j=0}^d \binom rj - 2^d. $$
\end{primetheorem}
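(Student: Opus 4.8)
The plan is to dualize via \refe{bet-gam} and then sharpen the polynomial-method proof of Theorem \reft{beta-upper}. Put $k:=r-d$. If $B\seq\Fr$ is non-blocking with $|B|=\bet_r(d)$, then $C:=\Fr\stm B$ is $k$-complete and $|C|=2^r-\bet_r(d)$, so it suffices to prove, for \emph{every} $k$-complete set $C\seq\Fr$, the enhanced lower bound
\begin{equation*}
  (1-2^{-k})\,|C|\ge\sum_{j=0}^{k-1}\binom rj ;
\end{equation*}
applying this to $C=\Fr\stm B$ and unwinding \refe{bet-gam} recovers the claimed inequality. Recall the linear-algebraic proof of Theorem \reft{beta-upper} (presumably one of the two proofs there): let $\cP_{k-1}$ be the space of multilinear polynomials $\Fr\to\F_2$ of degree $\le k-1$, so $\dim\cP_{k-1}=\sum_{j=0}^{k-1}\binom rj$, and let $\rho\colon\cP_{k-1}\to\F_2^C$ be the restriction-to-$C$ map. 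Since the sum of a polynomial of degree $<k$ over any $k$-flat vanishes (its top coefficient is $0$), one has $\sum_{x\in v+L_v}f(x)=0$ for every $v$; as $(v+L_v)\stm\{v\}\seq C$, a polynomial vanishing on $C$ vanishes at every $v$, whence $\rho$ is injective. This already gives $\dim\cP_{k-1}\le|C|$.

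To gain the factor $1-2^{-k}$ I would exhibit many linear relations satisfied by the vectors $\rho(f)$. For $v\in C$ the entire flat $F_v:=v+L_v$ lies in $C$, so the functional $\phi_v\colon\F_2^C\to\F_2$, $g\mapsto\sum_{x\in F_v}g(x)$, annihilates $\rho(\cP_{k-1})$, again because $\sum_{x\in F_v}f(x)=0$ for $\deg f<k$. Writing $\Phi:=\<\phi_v\colon v\in C\>$, we obtain $\rho(\cP_{k-1})\seq\Phi^{\perp}$ and hence
\begin{equation*}
  \sum_{j=0}^{k-1}\binom rj=\dim\rho(\cP_{k-1})\le|C|-\dim\Phi .
\end{equation*}
Everything thus reduces to the rank estimate $\dim\Phi\ge|C|/2^k$, which is the single genuinely new ingredient and the step I expect to be the crux.

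For that estimate I would argue by $\F_2$-supports. Extract from the spanning family a basis $\phi_{v_1}\longc\phi_{v_t}$ of $\Phi$, where $t:=\dim\Phi$. Every $\phi_v$ is an $\F_2$-combination of these, and the support of a sum is contained in the union of the supports; since $\phi_v$ has support $F_v$, this yields $F_v\seq F_{v_1}\cup\dotsb\cup F_{v_t}$ for all $v\in C$. Because $v\in F_v$, the $t$ flats $F_{v_1}\longc F_{v_t}$ cover $C$, so $|C|\le t\cdot 2^k$, i.e.\ $\dim\Phi\ge|C|/2^k$. Feeding this into the displayed inequality gives $(1-2^{-k})|C|\ge\sum_{j=0}^{k-1}\binom rj$, and substituting $k=r-d$ and $|C|=2^r-\bet_r(d)$ and simplifying recovers $(1-2^{d-r})\bet_r(d)\le\sum_{j=0}^d\binom rj-2^d$. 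The degenerate cases $d=0$ and $d=r$ (where both sides vanish) I would dispatch separately; apart from these, the only non-bookkeeping point is the covering inequality $\dim\Phi\ge|C|/2^k$, whose proof rests solely on the fact that the flats $F_v$ cover $C$.
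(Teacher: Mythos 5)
Your proof is correct, and it establishes the inequality by a route that is the linear-algebra dual of the paper's. The paper elaborates on its \emph{second} (indicator-polynomial) proof of Theorem\reft{beta-upper}: staying inside $\cL_{r,d}$, it shows that the span of $\{P_b\colon b\in B\}$ meets the span $L_C$ of $\{P_c\colon c\in C\}$ trivially, whence $|B|\le\dim\cL_{r,d}-\dim L_C$, and then proves $\dim L_C\ge 2^{d-r}|C|$ by a covering observation: fewer than $2^{d-r}|C|$ of the flats $F_c$ cannot cover $C$, and an uncovered point $c'$ renders $P_{c'}$ independent of the corresponding $P_c$'s. You instead pass to the complement using relation\refe{bet-gam} and elaborate on the paper's \emph{first} (Dvir-style) proof: injectivity of the restriction map on $\cL_{r,r-d-1}$, plus orthogonality of its image to the flat-sum functionals $\phi_v$, gives a dimension count inside $\F_2^C$; your rank bound $\dim\Phi\ge 2^{d-r}|C|$ is then the contrapositive of the paper's covering step (a spanning family of flat-functionals must cover $C$, by supports, versus a non-covering family cannot span). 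Identifying functionals on $\F_2^C$ with vectors of $\F_2^C$, your $\Phi$ is precisely the span of the restrictions to $C$ of indicator polynomials of the same co-$d$-flats, and the orthogonality you exploit (polynomials of degree less than $r-d$ sum to zero over $(r-d)$-flats) is exactly the Reed--Muller duality pairing $\cL_{r,d}$ against $\cL_{r,r-d-1}$; so the combinatorial crux --- covering $C$ by co-$d$-flats of size $2^{r-d}$ --- is identical in the two arguments, and the complementary dimension counts match under this duality. What your formulation buys is that it needs only the injectivity of the restriction map, not the linear independence of the polynomials $P_b$; the price is the dualization bookkeeping and the degenerate cases $d\in\{0,r\}$, which, as you note, are immediate.
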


It is not difficult to derive from Theorem \reft{beta-upper-strong}, for
instance, that $\bet_r(d)\le\sum_{j=0}^d\binom{r}{j}-2^{d-1}$ whenever
$d<r/2$, or that $\bet_r(d)\le\sum_{j=0}^d\binom{r}{j}-2^{d}$ whenever
$d<0.227r$ (for the latter conclusion assume, for a contradiction, that
$\bet_r(d)\ge\sum_{j=0}^d\binom{r}{j}-2^{d}+1$, and use \refe{binomial}).

\section{Constructions: upper bounds for $\gam_r$, lower bounds for $\bet_r$}
  \label{s:summaryII}

All bounds listed in this section are constructive. We confine here to the
resulting estimates and comparison between them, with the underpinning
constructions being incorporated into the proofs (presented in Section
\refs{proofsII}). For the background material in coding theory (simplex
codes, dual-BCH codes, Greismer and Carlitz-Uchiyama bounds), the reader can
refer any standard textbook, such as \cite{b:mcwsl,b:vl}.

\begin{theorem}\label{t:gamma-upper}
For any integer $r\ge d\ge 3$ we have
  $$ \gam_r(d) < K_d\cdot 2^{\lpr\frac12-\eps_d\rpr r}, $$
where $\eps_d=\frac1{2(2^d-1)}$ and $K_d=(2^d-1)2^{2^{d-1}-(3/2)+\eps_d}$.
\end{theorem}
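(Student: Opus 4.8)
The plan is to produce an explicit construction, since the statement is an upper bound on $\gam_r(d)$. I would first reformulate the task coordinate-wise: choosing a $d$-subspace $L$ through an apex $v$ amounts to prescribing, for each $\ell\in L\stm\{0\}$, the displaced point $v+\ell$, and the $2^d-1$ nonzero points of $L$ form a projective $(d-1)$-space. Placing these on coordinates, the incidence pattern of such a flat is governed by the simplex code $\mathcal{S}_d$, a $[\,2^d-1,d\,]$ code all of whose nonzero words have weight $2^{d-1}$; it meets the Griesmer bound for minimum distance $2^{d-1}$ with length exactly $n:=2^d-1$. This already exposes the two parameters of the theorem: the length $n$ (which reappears as the factor $2^d-1$ and in the shift $r\mapsto r+(n-1)$), and the identity $\tfrac{2^{d-1}-1}{2^d-1}=\tfrac12-\eps_d$, the ratio between the ``free'' weight $2^{d-1}-1$ and the length $n$.

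Second, I would build a complete set in a convenient base dimension chosen so that $(\tfrac12-\eps_d)r_0$ is integral; the natural choice is $r_0=2n=2^{d+1}-2$, for which $(\tfrac12-\eps_d)r_0=2^{d}-2$. The construction cannot be a product set nor a union of cosets: a short computation shows that a box $\prod_w C_w$ or a union of full $L$-cosets covers only apexes already lying in $C$, so all the efficiency must come from flats that protrude from $C$ only at their apex. I would instead take $C$ to be a union of translates of a single algebraic gadget whose fibres are codeword supports, using dual-BCH codewords — whose nonzero weights cluster near half their length by the Carlitz--Uchiyama bound — to assign to each apex a $d$-flat whose remaining $n$ points land in $C$. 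The constant-weight property of $\mathcal{S}_d$ then forces each such flat to meet each layer of $C$ in exactly the predicted number of points, which is what keeps $|C|$ at the Griesmer-optimal count.

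Third, I would pass from the base dimension to an arbitrary $r$. Lemma \refl{sub-mult} bounds $\gam_r(d)$ by products of base values, but since the stated estimate is in fact stronger than iterating sub-multiplicativity (the product bound loses a factor $n\,2^{(\frac12-\eps_d)(n-1)}$ per step), I expect instead to run the gadget construction directly in dimension $r$, padding by at most $n-1$ coordinates and absorbing the overhead into the multiplicative constant. Collecting the gadget size and the padding loss should produce exactly $K_d=(2^d-1)2^{2^{d-1}-(3/2)+\eps_d}$ together with the effective dimension $r+n-1$, i.e. the closed form $\gam_r(d)<(2^d-1)\,2^{(n-1)(r+n-1)/(2n)}$.

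The hard part will be the covering step: showing that the coding-based family of flats leaves \emph{no} apex uncovered, with no loss in the exponent. A counting heuristic confirms this is delicate — for a random set of the target density $2^{-(\frac12+\eps_d)r}$, the expected number of admissible flats through a typical point is $\approx 2^{(d-2^{d-1})r}$, which tends to zero for $d\ge 3$ — so the algebraic structure is essential rather than cosmetic; it is precisely the Griesmer-optimality of $\mathcal{S}_d$ and the Carlitz--Uchiyama control on the dual-BCH weights that guarantee every one of the $2^r$ apexes is matched. The remaining bookkeeping — verifying that the overlaps among translated gadgets are exactly as large as claimed, and that the padding contributes only the stated constant — is routine once the covering property is secured.
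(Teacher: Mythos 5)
Your proposal correctly reverse-engineers the parameters (the simplex code of length $n=2^d-1$ with all nonzero weights $2^{d-1}$, the ratio $\frac{2^{d-1}-1}{n}=\frac12-\eps_d$, and even the closed form of $K_d$), but it never produces the actual construction: neither the set $C$ nor the assignment $v\mapsto F_v$ is defined, and the step you defer as ``the hard part'' --- covering every apex --- is precisely what is missing. In the paper's proof that step is not delicate at all; it is automatic once $C$ is chosen correctly. One decomposes $\Fr=V_1\oplus\dotsb\oplus V_n$ with $\dim V_i\in\{\lfl r/n\rfl,\lcl r/n\rcl\}$ and sets
  $$ C := \bigcup_{(s_1\longc s_n)\in S\stm\{0\}}\ \bigoplus_{i\colon s_i=0} V_i, $$
where $S$ is the simplex code. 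Writing $v=v_1\longp v_n$ with $v_i\in V_i$, the flat
  $$ F_v := v+\{s_1v_1\longp s_nv_n\colon (s_1\longc s_n)\in S\} $$
passes through $v$, and for each nonzero codeword $s$ the corresponding point equals $\sum_{i\colon s_i=0}v_i$, which lies in $C$ by construction; hence $F_v\seq C\cup\{v\}$ for \emph{every} $v$, with no counting or weight-concentration argument whatsoever. The only subtlety is the degenerate case $\dim F_v<d$: then $s_1v_1\longp s_nv_n=0$ for some nonzero codeword $s$, so $v$ itself lies in the subspace $\bigoplus_{i\colon s_i=0}V_i\seq C$, whose dimension is at least $(2^{d-1}-1)\lfl r/n\rfl\ge d$ (using $d\ge3$ and $r\ge n$), and any $d$-flat through $v$ inside it finishes the argument. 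The size bound $|C|<(2^d-1)\,2^{(2^{d-1}-1)\lpr\frac{r-1}{n}+1\rpr}=K_d\cdot2^{\lpr\frac12-\eps_d\rpr r}$ then yields the constant, while the case $r<n$ is disposed of by the trivial bound $\gam_r(d)<2^r$.

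Two points in your plan would actively lead you astray. First, your claim that the construction ``cannot be a union of full $L$-cosets'' is contradicted by the set above: $C$ is exactly a union of $2^d-1$ linear subspaces; the flats through apexes $v\notin C$ simply are not required to have $v$ in $C$, which is the whole point of the definition of completeness. Second, dual-BCH codes and the Carlitz--Uchiyama bound do not belong in this proof; in the paper they serve the separate Theorem\reft{BCH}, where one needs codes of length polynomial (rather than exponential) in $d$ to get bounds that survive for large $d$. For the present theorem the simplex code alone suffices, and no base-dimension $r_0=2n$, padding, or sub-multiplicativity step is needed --- the construction runs directly in dimension $r$, with the floor/ceiling choice of the $\dim V_i$ absorbing all divisibility issues.
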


As a particular case to be compared against Theorem \reft{gamma-lower}, we
have $\gam_r(3)=O(2^{3r/7})$.

As the reader will see, the proof of Theorem \reft{gamma-upper} relies on the
properties of simplex codes. The reason to prefer simplex codes over other
codes is that these codes have the largest possible relative minimum distance
among all codes of given dimension $d$ (as the Griesmer bound readily shows).
The drawback of the simplex codes, on the other hand, is that their length is
exponential in the dimension, leading eventually to the double-exponential
dependence on $d$ in the constant $K_d$ of Theorem \reft{gamma-upper}, and
hence resulting in very poor bounds as $d$ grows. Indeed, the estimate of the
theorem becomes trivial for $d\sim\log r$. Using other codes one can produce
non-trivial estimates for reasonably large values of $d$. Specifically, the
argument employed in the proof of Theorem \reft{gamma-upper} shows that if
$n$ and $\mu$ are positive integers such that there exists a code $S$ of
length $n$, minimum distance $\mu$, dimension $d$, and the largest weight $M$
satisfying $(n-M)\lfl r/n\rfl\ge d$, then
  $$ \gam_r(d) < 2^{(1-\mu/n)r+n+d-\mu}. $$
Indeed, it suffices that the dimension of $S$ be \emph{at least} $d$, as it
follows by considering any subcode of $S$ of dimension $d$. Choosing $S$ to
be the dual of a BCH code with appropriately chosen parameters, we prove
\begin{theorem}\label{t:BCH}
There exists an absolute constant $K$ such that for any integer $r\ge d\ge3$
we have
  $$ \gam_r(d) < 2^{0.5r + K(dr/\log_2r)^{2/3}}. $$
Hence, if $d=o(\sqrt r\log_2 r)$, then $\gam_r(d)<2^{(0.5+o(1))r}$.
\end{theorem}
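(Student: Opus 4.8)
The plan is to feed the dual of a Bose--Chaudhuri--Hocquenghem code into the general estimate recorded just before the theorem: if $S$ is a binary code of length $n$, dimension at least $d$, minimum distance $\mu$ and largest weight $M$ with $(n-M)\lfl r/n\rfl\ge d$, then $\gam_r(d)<2^{(1-\mu/n)r+n+d-\mu}$. I would take $S$ to be the dual of a primitive narrow-sense BCH code of length $n=2^m-1$ and designed distance $2t+1$, where the integer parameters $m$ and $t$ are to be optimized. Three facts about $S$ are needed: its length is $n=2^m-1$; its dimension equals the size of the union of the $2$-cyclotomic cosets $C_1\cup C_3\cup\dotsb\cup C_{2t-1}$ modulo $n$, which is at most $mt$ and, crucially, equals $mt$ once these $t$ cosets are each of full size $m$ and pairwise distinct; and, by the Carlitz--Uchiyama bound, every nonzero weight $w$ of $S$ satisfies $|w-2^{m-1}|\le(t-1)2^{m/2}$, whence $\mu\ge 2^{m-1}-(t-1)2^{m/2}$ and $M\le 2^{m-1}+(t-1)2^{m/2}$.

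Substituting these estimates, $1-\mu/n\le\tfrac12+(t-1)2^{-m/2}$ and $n+d-\mu\le 2^{m-1}+d+(t-1)2^{m/2}$, so the exponent in the general bound is at most $\tfrac r2+E$, where
\[
  E=(t-1)2^{-m/2}\,r+2^{m-1}+d+(t-1)2^{m/2}.
\]
To force the dimension to be at least $d$ I would set $t=\lceil d/m\rceil$, so that $t=\Tht(d/m)$ and $mt\ge d$. The remaining freedom is in $m$, and here the dominant competition is between the first term, of order $(d/m)2^{-m/2}r$ and decreasing in $m$, and the term $2^{m-1}$, increasing in $m$. Balancing $(d/m)2^{-m/2}r\asymp 2^{m}$ gives $2^{3m/2}\asymp dr/m$, i.e. $2^m\asymp(dr/m)^{2/3}$; since $m=\Tht(\log_2 r)$ throughout the relevant range, this yields $2^m\asymp(dr/\log_2 r)^{2/3}$ and hence $E=O\big((dr/\log_2 r)^{2/3}\big)$ from the $2^{m-1}$ term.

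It then remains to check that the two auxiliary terms of $E$ are also $O\big((dr/\log_2r)^{2/3}\big)$. A short computation shows $d\le(dr/\log_2r)^{2/3}$ whenever $d\le r^2/(\log_2 r)^2$, and $(t-1)2^{m/2}\asymp(d/m)2^{m/2}=O\big((dr/\log_2r)^{2/3}\big)$ whenever $d=O(\sqrt r\,\log_2 r)$. Thus in the range $d=O(\sqrt r\,\log_2 r)$ the construction delivers $\gam_r(d)<2^{r/2+K(dr/\log_2 r)^{2/3}}$ with an absolute $K$; for larger $d$ the claimed exponent $\tfrac12 r+K(dr/\log_2 r)^{2/3}$ already exceeds $r$ once $K$ is fixed large enough, so the bound holds trivially from $\gam_r(d)\le 2^r$. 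Matching the two regimes and absorbing the rounding of $m,t$ into $K$ completes the argument; the final clause is then immediate, since $d=o(\sqrt r\log_2 r)$ forces $(dr/\log_2 r)^{2/3}=o(r)$.

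The main obstacle I anticipate is the dimension bookkeeping. The inequality $\dim S\ge d$ relies on the cosets $C_1,C_3,\dotsc,C_{2t-1}$ being of full size $m$ and pairwise distinct, which is guaranteed only while $2t-1\lesssim 2^{m/2}$; one must verify that this is compatible with the optimal $m$. Since at the optimum $t\asymp d/\log_2 r$ and $2^{m/2}\asymp(dr/\log_2 r)^{1/3}$, the constraint $t\lesssim 2^{m/2}$ again reduces to $d=O(\sqrt r\,\log_2 r)$ --- precisely the regime in which the theorem is non-trivial --- so no genuine loss occurs. The secondary nuisance is that $m$, and hence $n=2^m-1$, must be of this special form, so the real optimum cannot be hit exactly; choosing the best admissible integer $m$ perturbs each quantity by a bounded factor, which is harmless for a bound of this shape.
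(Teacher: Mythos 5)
Your proposal is correct and follows essentially the same route as the paper's proof: the paper likewise feeds the dual BCH code into the generic bound\refe{gamma-upper-generic}, with $e=\lceil d/m\rceil$ playing the role of your $t$, $m$ chosen so that $2^m\asymp\bigl(dr/\log_2(dr)\bigr)^{2/3}$ (equivalent to your choice since $\log_2(dr)=\Theta(\log_2 r)$ for $d\le r$), the Carlitz--Uchiyama weight bound, the same cyclotomic-coset criterion $e\le 2^{\lceil m/2\rceil-1}$ for the dimension, and the same reduction of the case $d\gtrsim\sqrt r\,\log_2 r$ to a trivial bound by taking $K$ large. The one hypothesis you state but never explicitly verify is $(n-M)\lfloor r/n\rfloor\ge d$; this is not a real gap, since in the regime $d<c\sqrt r\,\log_2 r$ your own estimates give $n<r$ and $(t-1)\sqrt n\le n/4$, whence $(n-M)\lfloor r/n\rfloor\ge \tfrac n4\cdot\tfrac r{2n}=\tfrac r8>d$, exactly as the paper checks.
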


We now turn to estimates which (unlike those of Theorems \reft{gamma-upper}
and \reft{BCH}) are mostly of interest for flats of low \emph{co}-dimension.

\begin{theorem}\label{t:beta-lower}
For integer $r$ and $d$ with $2\le d\le r/2$, let $\rho$ denote the remainder
of the division of $r$ by $2d$. Then
  $$ \bet_r(d) = \sum_\sub{0\le i\le 2d-\rho,\,0\le j\le\rho \\ i+j=d}
          \binom{2d-\rho}{i} \binom{\rho}{j}
                 \lfl\frac{r}{2d}\rfl^i \lpr\lfl\frac{r}{2d}\rfl+1\rpr^j $$

Consequently,
  $$ \bet_r(d) \ge \binom{2d}{d}\lfl\frac{r}{2d}\rfl^d. $$
\end{theorem}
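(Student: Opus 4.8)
The plan is to establish the stated value in two matching halves, after recording a cleaner description of the right-hand side. Writing $q:=\lfl r/(2d)\rfl$, a balanced partition $r=s_1\longp s_{2d}$ has $2d-\rho$ parts equal to $q$ and $\rho$ parts equal to $q+1$, and the displayed sum is then exactly the $d$-th elementary symmetric function $E_d(s_1\longc s_{2d})=\sum_{|T|=d}\prod_{g\in T}s_g$, i.e. the coefficient of $x^d$ in $\prod_{g=1}^{2d}(1+s_gx)$. I would also pass to the complementary formulation: a set $B\seq\Fr$ is $d$-non-blocking precisely when for every $v$ there is a surjection $\phi\colon\Fr\to\F_2^d$ whose fibre $\phi^{-1}(\phi(v))$ meets $B$ only, if at all, in $v$ itself. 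This is the assertion that $\Fr\stm B$ is $(r-d)$-complete, so by \refe{bet-gam} the theorem is equivalent to $\gam_r(r-d)=2^r-E_d(s_1\longc s_{2d})$.

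For the lower bound $\bet_r(d)\ge E_d(s_1\longc s_{2d})$ I would exhibit an explicit non-blocking set. Decompose $\Fr=V_1\oplus\dotsb\oplus V_{2d}$ with $\dim V_g=s_g$, fix a basis of each $V_g$, and let $B$ consist of all vectors having exactly one non-zero coordinate in each of some $d$ of the blocks and zero in the remaining $d$ blocks; counting by the choice of the $d$ active blocks and one basis vector in each gives $|B|=\sum_{|T|=d}\prod_{g\in T}s_g=E_d(s_1\longc s_{2d})$. To verify that $B$ is $d$-non-blocking, fix $v=v_1\longp v_{2d}$ and build the witnessing fibre according to how $v$ sits across the blocks. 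The guiding case is $v\in B$, where $v$ is active in exactly $d$ blocks: one freezes the $d$ active coordinates of $v$ and leaves everything else free, and the resulting $(r-d)$-flat through $v$ contains no other member of $B$, since any element of $B$ agreeing with $v$ on all $d$ of those coordinates must coincide with $v$. The remaining cases, where $v$ is active in fewer or more than $d$ blocks or has several non-zero coordinates inside a block, are handled by the same idea, choosing which coordinates to freeze so that no one-per-block weight pattern can reappear in the fibre.

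The matching upper bound $\bet_r(d)\le E_d(s_1\longc s_{2d})$ is where the real work lies, and I expect it to be the main obstacle; note that it is strictly stronger than Theorem \reft{beta-upper} in this range, so it cannot be deduced from $\sum_{j=0}^d\binom rj$. I would first isolate the base case $r=2d$, where the claim reads $\bet_{2d}(d)=\binom{2d}{d}$ and the extremal set is the full middle layer of weight-$d$ vectors. Here the non-blocking condition says exactly that every point of $\Fr$ lies in some coset of some $d$-dimensional subspace meeting $B$ in at most one point. The aim is to show that if $|B|>\binom{2d}{d}$ then some point $v$ is \emph{bad}, meaning that every $d$-dimensional subspace coset through $v$ contains at least two points of $B$; I would seek this through a counting argument over all $d$-subspace cosets, combining the requirement that the light cosets cover $\Fr$ with the fact that each light coset absorbs at most one point of $B$. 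For general $r$ I would then propagate the base case by induction, peeling one block off the decomposition and using the recursion $E_d(s_1\longc s_{2d})=E_d(s_1\longc s_{2d-1})+s_{2d}\,E_{d-1}(s_1\longc s_{2d-1})$, matched to a recursive estimate for $\bet$ obtained by restricting translates to the cosets of the last block. Forcing the two recursions to agree term by term, so as to preserve equality rather than merely an inequality, is the delicate point.

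Finally the displayed consequence is immediate: every part satisfies $s_g\ge q=\lfl r/(2d)\rfl$, and $E_d$ is non-decreasing in each argument (it is affine with non-negative slope $E_{d-1}$ of the remaining variables in each $s_g$), so $E_d(s_1\longc s_{2d})\ge E_d(q\longc q)=\binom{2d}{d}q^d$, which is the asserted bound $\bet_r(d)\ge\binom{2d}{d}\lfl r/(2d)\rfl^d$.
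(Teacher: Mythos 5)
Your construction, the count, and the treatment of $v\in B$ coincide exactly with the paper's proof; the gap is the sentence dismissing the remaining cases as ``handled by the same idea, choosing which coordinates to freeze''. For the key remaining case this is not a harmless omission but an impossibility: if $w_i(v)\le 1$ for every block $i$ while $w(v)\ne d$ (for instance $v=0$, or $v$ with one nonzero coordinate in each of $d+1$ blocks), then \emph{no} coordinate-freezing flat of codimension at most $d$ through $v$ avoids $B\stm\{v\}$. Indeed, such a flat is given by a set $E_1\seq\supp(v)$ of coordinates frozen to $1$ and a set $E_0$ disjoint from $\supp(v)$ frozen to $0$, with $|E_1|+|E_0|\le d$. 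The elements of $E_1$ lie in $|E_1|$ distinct blocks, and $E_0$ can contain at most $|E_0|\le d-|E_1|$ entire blocks (each block has at least $\lfl r/(2d)\rfl\ge1$ basis vectors, and none of the blocks met by $E_1$ can be swallowed by $E_0$); hence among the other $2d-|E_1|$ blocks at least $d$ retain a coordinate outside $E_0$. Choosing one such coordinate in $d-|E_1|$ of those blocks and adjoining $E_1$ yields a one-per-block vector $u$ of weight exactly $d$, i.e.\ an element of $B$, lying in the flat; and $u\ne v$ since $w(u)=d\ne w(v)$. The paper therefore uses a construction of a different type here: pick $I\seq[1,2d]$ with $|I|=d+1$ on which the weights $w_i(v)$ are all equal (possible because the number of blocks with $w_i(v)=1$ equals $w(v)\ne d$, so either at least $d+1$ blocks have weight $1$ or at least $d+1$ have weight $0$), and let $F_v$ consist of all $u$ whose weights $w_i(u)$, $i\in I$, share a common parity. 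This is a subspace of codimension $d$ containing $v$ and disjoint from $B$: an element of $B$ has all block weights in $\{0,1\}$ summing to $d$, so on $d+1$ blocks these weights can be neither all $1$ (total weight would be at least $d+1$) nor all $0$ (total weight would be at most $d-1$). Your other outstanding case, some $w_i(v)\ge 2$, is genuinely handled by freezing---fix two coordinates of $\supp_i(v)$---exactly as in the paper.

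As for what you call the main obstacle, a matching upper bound for $\bet_r(d)$: the paper's own proof contains no such argument. It only exhibits $B$ and verifies that it is $d$-non-blocking, which establishes the lower bound; the theorem sits in the section devoted to lower bounds for $\bet_r$, and every later use of it invokes only that direction (the ``Consequently'' clause, the derivation of \refe{d>r/2}, the comparison with Theorem\reft{beta-lowerprime}). So you are right that the equality sign, read literally, would require a separate argument, and your sketch of one (base case $r=2d$ plus a block-peeling induction) is indeed not carried out; but relative to what the paper actually proves, the missing piece in your write-up is the parity construction above, not the upper bound. Your final monotonicity step giving $\bet_r(d)\ge\binom{2d}{d}\lfl r/(2d)\rfl^d$ is correct and matches the paper's.
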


We notice that Theorems \reft{beta-upper} and \reft{beta-lower} give
$\bet_r(d)=\Ome_d(r^d)$. It follows, say, that $\gam_r(r-2)=2^r-\Ome(r^2)$;
compared with Theorem~\reft{gamma2} this shows that $\gam_r$, considered as a
function of $d\in[0,r]$, exhibits a highly asymmetric behavior.

\begin{theorem}\label{t:beta-lowerprime}
For integer $r\ge d\ge 2$, let $\rho$ denote the remainder of the division of
$r$ by $d$. Then
  $$ \bet_r(d) \ge \lpr \lfl\frac{r}{d}\rfl+1 \rpr^{d-\rho}
                                    \lpr \lfl\frac{r}{d}\rfl+2\rpr^{\rho}. $$
Consequently,
  $$ \bet_r(d)>(r/d)^d, $$
and if $d\ge r/2$, then
  $$ \bet_r(d) \ge \lpr\frac32\rpr^r \lpr\frac43\rpr^d. $$
\end{theorem}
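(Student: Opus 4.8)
The plan is to argue entirely on the non-blocking side. By \refe{bet-gam} it suffices to exhibit a single $d$-non-blocking set $B\seq\Fr$ of size $(\lfl r/d\rfl+1)^{d-\rho}(\lfl r/d\rfl+2)^\rho$. I would use the reformulation that $B$ is $d$-non-blocking exactly when every translate $B+v$ meets some codimension-$d$ subspace only in $0$; equivalently, when for every $v\in\Fr$ there is a linear map $T\colon\Fr\to\Fr[d]$ with $Tx\ne 0$ for all $x\in(B+v)\stm\{0\}$ (its kernel is then the desired subspace, and any codimension-$d$ subspace inside a larger such kernel works as well). For the construction, split $\Fr=W_1\oplus\dots\oplus W_d$ into $d$ blocks whose dimensions $m_i:=\dim W_i$ are as equal as possible, so that $m_i\in\{\lfl r/d\rfl,\lfl r/d\rfl+1\}$ with exactly $\rho$ of them equal to $\lfl r/d\rfl+1$. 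Fixing a basis in each block, set $A_i:=\{0\}\cup\{\text{basis vectors of }W_i\}$, so $|A_i|=m_i+1$, and let $B:=A_1\times\dots\times A_d$ under the direct-sum identification. Then $|B|=\prod_{i=1}^d(m_i+1)=(\lfl r/d\rfl+1)^{d-\rho}(\lfl r/d\rfl+2)^\rho$, matching the claimed bound.

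The verification splits into two cases for a given $v=(v_1\longc v_d)$. If $v_i\in A_i$ for every $i$, then in each block $0\in A_i+v_i$ and $(A_i+v_i)\stm\{0\}$ consists of $m_i$ non-zero vectors on which a single functional $\phi_i$ can be made to equal $1$ (take $\phi_i=\sum_k e_k^*$ if $v_i=0$, and $\phi_i=e_\ell^*$ if $v_i=e_\ell$); the product $U=\ker\phi_1\oplus\dots\oplus\ker\phi_d$ then has codimension $d$ and satisfies $U\cap(B+v)=\prod_i(\ker\phi_i\cap(A_i+v_i))=\{0\}$. If instead $v_j\notin A_j$ for some $j$, then $v_j$ has weight at least $2$ in the block basis and $0\notin A_j+v_j$; here I would empty this single block using only codimension $2$, leaving all other blocks untouched. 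This reduces the theorem to the fact I expect to be the technical heart: for $A=\{0,e_1\longc e_m\}\seq\Fr[m]$ and any $w$ of weight $\ge 2$ there is a linear map $M\colon\Fr[m]\to\Fr[2]$ non-zero on all of $A+w$. One can take $Me_{k_1}=(1,0)$ and $Me_{k_2}=(0,1)$ for two coordinates $k_1,k_2$ in the support of $w$, $Me_k=(0,0)$ on the rest of the support, and $Me_k=(1,0)$ off the support; then $Mw=(1,1)$ and a direct check shows $M$ avoids $0$ on $A+w$. Since $d\ge 2$, the resulting codimension-$2$ subspace of $W_j$, extended by all of the other blocks, meets $B+v$ in the empty set, and it contains a codimension-$d$ subspace with the same property.

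Finally I would read off the two consequences by elementary estimates: every factor satisfies $m_i+1>r/d$, giving $|B|>(r/d)^d$; and for $d\ge r/2$ one has $\lfl r/d\rfl=1$ (or $=2$ when $d=r/2$) and $\rho=r-d$, so the bound collapses to $2^{2d-r}3^{r-d}=(3/2)^r(4/3)^d$.

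The main obstacle to anticipate is precisely the temptation to treat the blocks independently: insisting on codimension $1$ in every block would force each $A_i$ to be a $1$-non-blocking set, hence $|A_i|\le 2$ by Theorem \reft{gamma1}, which yields only $2^d$. The construction escapes this by allocating the codimension budget non-uniformly — spending all of the "slack" on a single off-grid block — and the crux of the argument is the verification that codimension $2$ always suffices to clear such a block.
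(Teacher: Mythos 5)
Your proposal is correct and follows essentially the same route as the paper: the same decomposition of $\Fr$ into $d$ blocks of dimensions $\lfl r/d\rfl$ or $\lfl r/d\rfl+1$, the same set $B$ (vectors with weight at most $1$ in every block), and the same case split, spending codimension $1$ per block when $v\in B$ and codimension $2$ on the single ``off-grid'' block when $v\notin B$. The only differences are cosmetic: you work with co-$d$-subspaces avoiding nonzero points of the translate $B+v$ rather than with co-$d$-flats through $v$, and your codimension-$2$ map in the second case is a slightly more elaborate (but valid) choice than the paper's flat $\{u\colon E\seq\supp(u)\}$ with $E$ a two-element subset of $\supp_i(v)$.
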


While the bounds of Theorems \reft{beta-lower} and \reft{beta-lowerprime} may
not be easy to compare analytically, computations suggest that
Theorem~\reft{beta-lower} gives a better estimate for all $d\le r/2$, save
for a finite (and small) number of exceptional pairs $(d,r)$. For $d>r/2$
Theorem \reft{beta-lower} yields
\begin{equation}\label{e:d>r/2}
  \bet_r(d) \ge \bet_r(\lfl r/2)\rfl \ge \binom{2\lfl r/2\rfl}{\lfl r/2\rfl},
\end{equation}
which is superseded by Theorem \reft{beta-lowerprime} for $d$ very close to
$r$; namely, for $r-d\lesssim 1.738\ln r$. We also notice that if, indeed,
$r-d\lesssim 1.443\ln r$, then Theorem \reft{beta-lowerprime} itself is
superseded by Theorem \reft{gamma-upper}.

\begin{theorem}\label{t:beta-lower2}
Suppose that $r\ge d\ge 1,\ k\ge 1$, and $r_i\ge d_i\ge 0\ (i=1\longc k)$ are
integers such that $r_1\longp r_k\le r,\ d_1\longp d_k\le d$, and
 $r_i\le d+d_i$ for $i=1\longc k$. Then
  $$ \bet_r(d) \ge \binom{r_1}{d_1}\dotsb\binom{r_k}{d_k}. $$
\end{theorem}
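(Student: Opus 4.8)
The plan is to realize the lower bound by an explicit $d$-non-blocking set $B\seq\Fr$ of size $\binom{r_1}{d_1}\dotsb\binom{r_k}{d_k}$. Since $r_1\longp r_k\le r$, I regard $\Fr$ as $V_1\oplus\dotsb\oplus V_k\oplus V_0$ with $\dim V_i=r_i$ (and $V_0$ the leftover coordinates), and take $B:=B_1\oplus\dotsb\oplus B_k$, where $B_i\seq V_i$ is the Hamming sphere $\{x\in V_i\colon\operatorname{wt}(x)=d_i\}$, of size $\binom{r_i}{d_i}$. Recall that $B$ is $d$-non-blocking precisely when every $v$ lies on a codimension-$d$ flat $v+U$ with $(v+U)\cap B\seq\{v\}$. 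Writing $v=(v_1\longc v_k,v_0)$ and restricting to subspaces $U=U_1\oplus\dotsb\oplus U_k\oplus U_0$ respecting the decomposition gives the crucial reduction $(v+U)\cap B=\prod_i\big((v_i+U_i)\cap B_i\big)$ (nonempty only if $v_0\in U_0$): each block is handled separately, with codimension budget $\sum_i\codim U_i\le d$.

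I then split into three cases. If $v_0\ne 0$, a single hyperplane $U_0\le V_0$ avoiding $v_0$ forces $(v+U)\cap B=\est$ (every element of $B$ has zero $V_0$-part), at cost $1\le d$. If $v\in B$, i.e.\ $\operatorname{wt}(v_i)=d_i$ for all $i$, I collapse each factor to $\{v_i\}$ by taking $U_i:=\{x\in V_i\colon x|_{\supp(v_i)}=0\}$ of codimension $d_i$: then $v_i+U_i$ forces all coordinates of $\supp(v_i)$ to be $1$, so $v_i$ is its only weight-$d_i$ point. The total codimension is $\sum_i d_i\le d$, which is exactly where the hypothesis $d_1\longp d_k\le d$ is used. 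The remaining case is $v_0=0$ but $\operatorname{wt}(v_{i_0})\ne d_{i_0}$ for some $i_0$; here it suffices to make one factor empty, so $U_i:=V_i$ for $i\ne i_0$ and everything reduces to a local claim.

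That local claim is the heart of the matter: in $\F_2^n$, for any vector $u$ with $\operatorname{wt}(u)\ne\delta$ there is a subspace $U$ of codimension at most $\max(\delta,n-\delta)$ with $(u+U)$ disjoint from the weight-$\delta$ sphere. Granting it, I apply it with $n=r_{i_0}$ and $\delta=d_{i_0}$; since $d_{i_0}\le d$ and, by $r_{i_0}\le d+d_{i_0}$, also $r_{i_0}-d_{i_0}\le d$, the cost is $\max(d_{i_0},r_{i_0}-d_{i_0})\le d$. Thus the hypotheses $d_i\ge 0$ and $r_i\le d+d_i$ are precisely what keep every block within budget.

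To prove the local claim I first reduce, via the involution $x\mapsto x+\mathbf 1$ (preserving codimensions and swapping $\delta\leftrightarrow n-\delta$), to $\operatorname{wt}(u)=w>\delta$. If $n>2\delta$, a coordinate flat suffices: for a $(\delta+1)$-subset $T\seq\supp(u)$, the space $U=\{x\colon x|_T=0\}$ forces weight $\ge\delta+1$ on $T$, and $\delta+1\le n-\delta=\max(\delta,n-\delta)$. The delicate regime, which I expect to be the main obstacle, is $n\le 2\delta$: there the coordinate flat costs $\delta+1$, one more than the permitted $\max(\delta,n-\delta)=\delta$. The device I would use is to \emph{tie together} a group of coordinates rather than fix them: choose $G\seq\supp(u)$ with $|G|=\delta+1$ and set $U=\{x\colon x_i=x_j\text{ for all }i,j\in G\}$, of codimension $\delta$. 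On $u+U$ the coordinates of $G$ are either all $0$ or all $1$, so the attainable weights are $[0,n-\delta-1]\cup[\delta+1,n]$, and the assumption $n\le 2\delta$ is exactly what makes this set omit $\delta$. Assembling the three cases—and shrinking any flat of codimension $<d$ to codimension exactly $d$, which only shrinks the intersection with $B$—shows that $B$ is $d$-non-blocking, whence $\bet_r(d)\ge|B|=\binom{r_1}{d_1}\dotsb\binom{r_k}{d_k}$.
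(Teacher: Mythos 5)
Your proof is correct and takes essentially the same route as the paper: the same set $B$ (a product of Hamming spheres in a direct-sum decomposition), with the per-point flats built from the same two devices, coordinate-forcing subspaces and ``tied-coordinate'' subspaces (the paper's flats defined by ``$E\seq\supp(u)$ or $E\cap\supp(u)=\est$'' are exactly your tying construction, and the paper's hypothesis $r_i\le d+d_i$ enters in the same place). The differences are only organizational: you isolate a local claim with budget $\max(\delta,n-\delta)$ and use the involution $x\mapsto x+\mathbf 1$ to halve the case analysis, where the paper pads with trivial blocks $(r_i,d_i)=(1,0)$ instead of your $V_0$-hyperplane step and treats the cases $w_i(v)>d_i$ and $w_i(v)<d_i$ symmetrically by hand.
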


It is not difficult to see that for $d\ge r/2$, Theorem \reft{beta-lower2}
gives
  $$ \bet_r(d) \ge \binom{r}{\lfl r/2\rfl}; $$
this is identical or marginally stronger than \refe{d>r/2}. For $1\le d\le
r/2$, the maximum of the product $\binom{r_1}{d_1}\dotsb\binom{r_k}{d_k}$
under the constrains $r_i\ge d_i\ge 0,\ r_1\longp r_k\le r$, and
 $d_1\longp d_k\le d$ (with $r_i\le d+d_i$ \emph{not} assumed!) is
$\binom{r}{d}$; this is to be compared with Theorem \reft{beta-upper} and
also with the following corollary.
\begin{corollary}\label{c:rk}
If $r\ge 1$ and $\sqrt r<d\le r/2$ are integer, then
  $$ \bet_r(d) > e^{rH(d/r) - 2(r/d) \ln r}. $$
Consequently, if $d/\sqrt{r}\to\infty$ and $d\le r/2$, then
  $$ \bet_r(d) > \binom{r}{d}^{1+o(1)}. $$
\end{corollary}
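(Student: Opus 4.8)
The plan is to deduce the corollary directly from Theorem \reft{beta-lower2} by a judicious choice of the parameters $k$ and $(r_i,d_i)$. The guiding idea is that the unconstrained maximum of $\prod_i\binom{r_i}{d_i}$ equals $\binom{r}{d}$ (achieved by a single factor), so I want to chop the pair $(r,d)$ into $k$ pieces of roughly equal size, each satisfying the extra constraint $r_i\le d+d_i$, and then argue that the product of the small binomials recovers $\binom{r}{d}$ up to the stated multiplicative loss $e^{-2(r/d)\ln r}$. Concretely, I would set $d_i\approx d/k$ and $r_i\approx r/k$, so that $r_i-d_i\approx (r-d)/k$; since $r-d\ge d$ is false in general but $r-d\le r$, the constraint $r_i\le d+d_i$ becomes $r_i-d_i\le d$, and taking $k$ large enough that $r_i-d_i\le r/k \le d$ secures this. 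So the first step is to choose $k:=\lceil r/d\rceil$, giving $r_i-d_i\le r/k\le d$ as required, with $\sum r_i\le r$ and $\sum d_i\le d$ arranged by a clean integer partition.

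The main computational step is then to lower-bound $\prod_{i=1}^k\binom{r_i}{d_i}$ via the entropy estimate \refe{binomial}. Writing each factor as $\binom{r_i}{d_i}\ge (2r_i)^{-1/2}e^{r_iH(d_i/r_i)}$ and using that $H$ is concave together with $d_i/r_i\approx d/r$, the sum $\sum_i r_iH(d_i/r_i)$ should come out to approximately $rH(d/r)$; I expect the discrepancy to be of lower order because the $r_i$ partition $r$ and the ratios are nearly constant. The prefactors contribute $\prod_i(2r_i)^{-1/2}$, which is at worst $(2r)^{-k/2}$, i.e. a loss of $\exp(-\tfrac{k}{2}\ln(2r))$ in the exponent. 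With $k=\lceil r/d\rceil\le r/d+1$ and the hypothesis $d>\sqrt r$ (so $r/d<\sqrt r$ and $k$ is not too large), this loss is $O((r/d)\ln r)$, and I would bound it cleanly by $2(r/d)\ln r$ to land exactly on the stated inequality $\bet_r(d)>e^{rH(d/r)-2(r/d)\ln r}$.

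The hard part will be the bookkeeping that controls the floors and ceilings in the partition and confirms that the entropy sum loses nothing of higher order than the prefactor term — in particular, verifying that replacing each $H(d_i/r_i)$ by $H(d/r)$ incurs only a negligible (lower-order) cost, given the small deviations of $d_i/r_i$ from $d/r$ forced by integrality. This is where the condition $d>\sqrt r$ enters essentially: it keeps $k$ below $\sqrt r$, so that both the number of prefactor terms and the accumulated rounding error in the entropy sum stay within the budget $2(r/d)\ln r$. Finally, for the asymptotic consequence I would observe that when $d/\sqrt r\to\infty$ we have $(r/d)\ln r=o(rH(d/r))$ — since $rH(d/r)\gtrsim d\ln(r/d)$ dominates $(r/d)\ln r$ in this regime — so the exponential correction is subsumed into the $1+o(1)$ factor, yielding $\bet_r(d)>\binom{r}{d}^{1+o(1)}$ by a second application of \refe{binomial} to rewrite $e^{rH(d/r)}$ in terms of $\binom{r}{d}$.
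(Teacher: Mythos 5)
Your overall route is the same as the paper's: apply Theorem\reft{beta-lower2} with $k\approx r/d$ pieces of roughly equal size, then convert the product of binomial coefficients into an entropy bound via\refe{binomial}. The gap sits exactly at the step you yourself defer as ``the hard part'': a \emph{lower} bound of the form $\sum_i r_iH(d_i/r_i)\ge rH(d/r)-O((r/d)\ln r)$. The tool you invoke for it, concavity of $H$, gives precisely the opposite inequality: if $\sum_i r_i=r$ and $\sum_i d_i=d$, then Jensen yields $\sum_i r_iH(d_i/r_i)\le rH(d/r)$, so concavity is what you must overcome, and the entire content of the step is bounding the Jensen deficit. Moreover, your premise that the ratios $d_i/r_i$ deviate from $d/r$ only negligibly is false in part of the admissible range: when $d$ is close to $\sqrt r$, the target values $d_i\approx d/k\approx d^2/r$ are small integers, and rounding perturbs $d_i/r_i$ by a constant \emph{factor}, not by a lower-order term (e.g.\ if $d/k$ is just below $2$, taking every $d_i=\lfl d/k\rfl=1$ makes each ratio about $\tfrac12\cdot d/r$), which costs a constant fraction of $rH(d/r)$, i.e.\ order $d\ln(r/d)$, in the exponent. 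It happens that in this regime the budget $2(r/d)\ln r$ is generous enough to absorb such a loss, but establishing that requires an explicit mean-value estimate of $H(d/r)-H(d_i/r_i)$ which tracks how far the ratios can drop; nothing of the sort is in the proposal, and ``the ratios are nearly constant'' cannot serve as a substitute.

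The paper avoids the deficit question altogether by a one-sided device that is the idea you are missing. It does not partition $(r,d)$ exactly: since Theorem\reft{beta-lower2} only requires $kr_1\le r$ and $kd_1\le d$, it takes $k=\lfl r/d\rfl$ \emph{identical} pieces with $d_1:=\lfl d/k\rfl$ and, crucially, $r_1:=\lfl (d_1/d)\,r\rfl$. This choice of $r_1$ as a function of $d_1$ forces $d/r\le d_1/r_1\le 1/2$, so monotonicity of $H$ on $[0,1/2]$ gives $H(d_1/r_1)\ge H(d/r)$ with no error term at all; the only loss is that $kr_1$ falls short of $r$, which is bounded by $kr_1>r-(3/2)k^2$ and absorbed using $H(t)\le t\ln(e/t)$ together with $k\le r/d$. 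To salvage your version, either import this trick (skew each piece so that its ratio is at least $d/r$, rather than merely close to it) or carry out the second-order deficit analysis honestly; as written, the central inequality is asserted rather than proved.
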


A precise comparison between Theorems \reft{beta-lower} and
\reft{beta-lower2} is hardly feasible. However, the ``main terms''
(cf.~Corollary~\refc{rk}) are easy to compare, and it turns out that
  $$ e^{rH(d/r)} > \binom{2d}d \lpr\frac{r}{2d}\rpr^d $$
for all positive integer $r$ and $d\le r/2$. We remark, on the other hand,
that Theorem \reft{beta-lower2} fails to produce reasonable bounds if $d$ is
very small (as compared to $r$).

\section{Proofs, I: non-existence results}\label{s:proofsI}

\begin{proof}[Proof of Theorem \reft{gamma-lower}]
Suppose that $C\seq\Fr$ is $3$-complete; that is, every element $v\in\Fr$
lies in a $3$-flat $F_v$ with the other seven elements in $C$. We want to
show that if $r\ge 15$, then $|C|>c\cdot2^{3r/8}$.

Let $S$ be the set of all those $s\in\Fr$ representable as a sum of two
distinct elements of $C$, and for each $s\in S$ denote by $\nu(s)$ the number
of such representations, with two representations that differ by the order of
summands considered identical. Write $B:=\Fr\stm C$, and for each $s\in S$
let $B(s)$ be the set of all those $b\in B$ with $s\in b+F_b$. (Notice, that
$b+F_b$ is the linear $3$-subspace, parallel to the flat $F_b$.) Thus, every
$b\in B$ belongs to exactly seven sets $B(s)$, and hence
\begin{equation}\label{e:allWs}
  \sum_{s\in S} |B(s)| = 7|B|.
\end{equation}

For every $s\in S$ and $b\in B(s)$ there are three distinct representations
$s=c_1+c_2$ with $c_1,c_2\in F_b\cap C$. Since these representations uniquely
determine $F_b$, and hence $b$ itself, we have
\begin{equation}\label{e:choose3}
  |B(s)| \le \binom{\nu(s)}{3} < \frac 16\, (\nu(s))^3.
\end{equation}
Also,
\begin{equation}\label{e:|A|}
  |B(s)| \le |C|
\end{equation}
as $b+s\in F_b\stm\{b\}$ for each $b\in B(s)$, implying $B(s)+s\seq C$.

Averaging multiplicatively \refe{choose3} and \refe{|A|} with the exponent
weights $1/3$ and $2/3$, respectively, we get
  $$ |B(s)| < \frac1{\sqrt[3]6}\,|C|^{2/3} \nu(s), $$
and substitution into \refe{allWs} yields
\begin{equation}\label{e:gamma-lower1}
  7(2^r-|C|) < \frac1{\sqrt[3]6}
                    \: |C|^{2/3} \sum_{s\in S} \nu(s)
                           = \frac1{\sqrt[3]6} \: |C|^{2/3} \binom{|C|}2.
\end{equation}
If $|C|\ge 2^{r/2}$, then we are done as $2^{r/2}>c\cdot 2^{3r/8}$ for
 $r\ge 15$. If $|C|<2^{r/2}$, then $(2^r-|C|)/(|C|-1)>2^r/|C|$; consequently,
\refe{gamma-lower1} gives
  $$ |C|^{8/3} > 14\sqrt[3]6 \cdot 2^r $$
implying the result.
\end{proof}

Next, we give two proofs of Theorem \reft{beta-upper}. Both proofs rely on
the fact that if $\cL_{r,d}$ is the vector space of all multilinear
polynomials in $r$ variables over the field $\F_2$ of total degree at most
$d$, then $\dim\cL_{r,d}=\sum_{j=0}^d\binom rj$ (which is immediate from
looking at the ``monomial basis''). Nevertheless, the two proofs seem to
differ significantly. We keep using the notation $\cL_{r,d}$ below.

Our first proof goes along the lines of Dvir's proof \refb{d} of the finite
field Kakeya conjecture. We need two basic facts about polynomials over the
field $\F_2$.

\medskip
\noindent
{\bf Fact 1.\,} For integer $d\ge 1$, a polynomial in $d$ variables over
$\F_2$ of degree smaller than $d$ cannot vanish on all, but at most one point
of $\Fr[d]$. (To see this, observe that every monomial of the polynomial in
question is independent of at least one variable, hence the sum of its values
over $\Fr[d]$ is equal to $0$.)

\medskip
\noindent
{\bf Fact 2.\,} For integer $r\ge 1$, a non-zero multilinear polynomial in
$r$ variables over $\F_2$ cannot vanish on all points of $\Fr$. (For the
proof, notice that every function from $\Fr$ to $\F_2$ can be represented by
a multilinear polynomial, and that both the total number of all functions and
the total number of all multilinear polynomials are equal to $2^{2^r}$. Thus,
every function is \emph{uniquely} represented by such a polynomial.)

\begin{proof}[First proof of Theorem \reft{beta-upper}]
Assuming that \refe{gamma-binomial} is false, find a $d$-complete set
$C\seq\Fr$ with $|C|<\sum_{j=0}^{d-1} \binom rj$. Thus, for every $v\in\Fr$
there exists a $d$-subspace $L_v\le\Fr$ with $v+(L_v\stm\{0\})\seq C$. Since
$\dim\cL_{r,d-1}=\sum_{j=0}^{d-1} \binom rj$, the evaluation map from
$\cL_{r,d-1}$ to $\F_2^{|C|}$ (sending every polynomial to the $|C|$-tuple of
its values at the points of $C$) is degenerate. Hence, there is a non-zero
polynomial $P\in\cL_{r,d-1}$ vanishing at every point of $C$. As a result,
for each $v\in\Fr$ there exist $v_1\longc v_d\in\Fr$ such that
  $$ P(v+t_1v_1\longp t_dv_d)=0,\quad (t_1\longc t_d)\in\F_2^d\stm\{0\}. $$
This means that the polynomial
  $$ P(v+T_1v_1\longp T_dv_d)\in\F_2[T_1\longc T_d] $$
vanishes at every point of $\F_2^d\stm\{0\}$. The degree of this polynomial
is at most $\deg P\le d-1$. Hence, by Fact~1, we have $P(v)=0$; that is, $P$
vanishes at every point of $\Fr$. This, however, contradicts Fact~2.
\end{proof}

\begin{proof}[Second proof of Theorem \reft{beta-upper}]
Aiming at \refe{beta-binomial}, fix $B\seq\Fr$ with $|B|=\bet_r(d)$ such that
to every $b\in B$ there corresponds a co-$d$-flat $F_b\seq\Fr$ with $F_b\cap
B=\{b\}$. For every such flat, find a polynomial $P_b\in\cL_{r,d}$ with
$P_b(z)=1$ whenever $z\in F_b$, and $P_b(z)=0$ otherwise. (Such a polynomial
can be constructed by taking the product of $d$ linear factors corresponding
to $d$ hyperplanes whose intersection is $F_b$.) These $|B|$ polynomials are
linearly independent, as it follows by substituting the points $b\in B$ into
their linear combinations. Consequently,
  $$ \bet_r(d)=|B|\le\dim\cL_{r,d}=\sum_{j=0}^d\binom rj. $$
\end{proof}

\begin{proof}[Proof of Theorem \reft{beta-upper}$'$]
We elaborate on the second proof of Theorem \reft{beta-upper}. Fix a
$d$-non-blocking set $B\seq\Fr$ with $|B|=\bet_r(d)$, and write
 $C:=\Fr\stm B$. For each $v\in\Fr$ find a co-$d$-flat $F_v$ with
$v\in F_v\seq C\cup\{v\}$ and, as above, let $P_v\in\cL_{r,d}$ be an
``indicator polynomial'' of $F_v$. Write $\cP_B:=\{P_b\colon b\in B\}$ and
$\cP_C:=\{P_c\colon c\in C\}$. Notice, that the subspace of $\cL_{r,d}$
generated by $\cP_B$ intersects trivially the subspace generated by $\cP_C$:
for if
  $$ \sum_{b\in B} \eps(b) P_b = \sum_{c\in C} \eps(c) P_c $$
with $\eps\colon\Fr\to\F_2$ then, evaluating at any specific $b\in B$, we get
$\eps(b)=0$. Thus, denoting by $L_C$ the subspace, generated by $\cP_C$, we
have
  $$ |B| \le \dim\cL_{r,d} - \dim L_C, $$
and we claim that $\dim L_C\ge2^{-(r-d)}|C|$. To see this, we observe
that for any subset $C_0\seq C$ with $|C_0|<2^{-(r-d)}|C|$, we have
  $$ \textstyle \left|\bigcup_{c\in C_0} F_c\right| \le 2^{r-d}
                                                       \cdot |C_0| < |C|, $$
and that for any $c'\notin\cup_{c\in C_0} F_c$, the polynomial $P_{c'}$
is not a linear combination of the polynomials $P_c$ with $c\in C_0$.
Consequently, we have
  $$ |B| \le \dim \cL_{r,d}-2^{-(r-d)}|C|
                                   = \dim \cL_{r,d}-2^{-(r-d)} (2^r-|B|), $$
whence
  $$ (1-2^{d-r})|B| \le \sum_{j=0}^d \binom rj - 2^d $$
implying the result.
\end{proof}

\section{Proofs, II: constructions}\label{s:proofsII}

\begin{proof}[Proof of Theorem \reft{gamma-upper}]
For $r<2^d-1$ the assertion follows, by a straightforward computation, from
the trivial estimate $\gam_r(d)<2^r$. Suppose, therefore, that $r\ge 2^d-1$.

Write $n:=2^d-1$ and let $S<\F_2^n$ be the simplex code of length $n$. Thus,
$S$ is a $d$-subspace of $\F_2^n$, generated by the rows of the $d\times n$
matrix whose columns are all the non-zero vectors in $\F_2^d$, and every
non-zero element of $S$ has weight $2^{d-1}$ with respect to the standard
basis of $\F_2^n$. Choose subspaces $V_1\longc V_n\le\Fr$ so that
 $\Fr=V_1\oplus\dotsb\oplus V_n$ and the dimension of each $V_i$ is either
$\lfl r/n\rfl$ or $\lcl r/n\rcl$, and consider the set
  $$ \textstyle
     C := \bigcup_{(s_1\longc s_n)\in S\stm\{0\}} \quad %
                                  \bigoplus_{i\in[1,n] \colon s_i=0} V_i. $$
We have
\begin{align*}
  |C| &<   \sum_{c\in S\stm\{0\}} 2^{(n-2^{d-1})\lcl r/n\rcl} \\
      &\le (2^d-1)\, 2^{(2^{d-1}-1) \, \lpr \frac{r-1}{n}+1\rpr} \\
      &=   K_d\cdot 2^{\lpr\frac12-\eps_d\rpr r},
\end{align*}
and to complete the proof we show that for every $v\in\Fr$ there is a
$d$-flat passing through $v$ and contained in $C\cup\{v\}$. To this end, we
write $v=v_1\longp v_n$ with $v_i\in V_i$ for $=1\longc n$, and let
\begin{align*}
  F_v
    &:= \Big\{ {\sum}_{i\in[1,n]\colon s_i=0} v_i \colon
                                          (s_1\longc s_n)\in S \Big\} \\
    &\:=  v + \{ s_1v_1\longp s_nv_n \colon (s_1\longc s_n)\in S \}.
\end{align*}
Evidently, $F_v$ is a flat with $v\in F_v\seq C\cup\{v\}$. Moreover, the
dimension of $F_v$ is at most $d$. If it is equal to $d$, then we are done.
Otherwise, there exists an element $(s_1\longc s_n)\in S\stm\{0\}$ such that
$s_1v_1\longp s_nv_n=0$; equivalently, $v$ is an element of the subspace
$\oplus_{i\in[1,n]\colon s_i=0} V_i\seq C$, and we conclude the proof
observing that the dimension of this subspace is at least
 $(2^{d-1}-1)\lfl r/n\rfl\ge d$.
\end{proof}

As we have mentioned in Section \refs{summaryII} (and the reader can easily
check now), the argument employed in the proof of Theorem \reft{gamma-upper}
shows that if $n$ and $\mu$ are positive integers such that there exists a
code $S$ of length $n$, minimum distance $\mu$, dimension at least $d$, and
the largest weight $M$ satisfying $(n-M)\lfl r/n\rfl\ge d$, then
\begin{equation}\label{e:gamma-upper-generic}
  \gam_r(d) < 2^{(1-\mu/n)r+n+d-\mu}.
\end{equation}
This observation is used in the proof of Theorem \reft{BCH} below.

\begin{proof}[Proof of Theorem \reft{BCH}]
Consider the code dual to the BCH code with the parameters $m$ and $e$
defined by
  $$ m := \lcl \frac 23\,\big(\log_2(dr)-\log_2\log_2(dr)\big)\rcl,
                                                \ e := \lcl \frac dm\rcl. $$
This is a code of length $n:=2^m-1$, with the weight of every non-zero code
word in the interval $[0.5n-(e-1)\sqrt n,0.5n+(e-1)\sqrt n\,]$ and
consequently, having the minimum distance
  $$ \mu \ge 0.5n-(e-1)\sqrt n $$
and the maximum distance
  $$ M \le 0.5n+(e-1)\sqrt n $$
(the Carlitz-Uchiyama bound).

We notice that $r\ge d\ge 3$ implies $rd\ge 9$, whence
\begin{equation}\label{e:BCH-mlarge}
  m \ge 0.25 \log_2(dr).
\end{equation}
Also,
\begin{equation}\label{e:BCH-n}
  \frac12\,\left( \frac{dr}{\log_2(dr)} \right)^{2/3}
                \le n < 2\left( \frac{dr}{\log_2(dr)} \right)^{2/3}
\end{equation}
(the first inequality following from $n=2^m-1\ge2^{m-1}$).

Assuming now
\begin{equation}\label{e:BCH-d}
  d < c \sqrt r\log_2 r
\end{equation}
with a sufficiently small absolute constant $c>0$ (as we clearly can,
choosing $K$ large enough), by \refe{BCH-n} we get
\begin{equation}\label{e:BCH-r}
  r > c^{-2/3} \left( \frac{dr}{\log_2(dr)} \right)^{2/3} > n
\end{equation}
and, by \refe{BCH-n}, \refe{BCH-d}, and \refe{BCH-mlarge},
  $$ \frac{e-1}{\sqrt n} < \frac{d}{m\sqrt n}
      < 2 \frac dm\left( \frac{\log_2(dr)}{dr}\right)^{1/3}
      < 2c^{2/3} \, \frac{\log_2(dr)}{m} < 0.25. $$
As a result, and taking into account \refe{BCH-r} and \refe{BCH-d},
  $$ (n-M)\lfl r/n\rfl >  (0.5n-(e-1)\sqrt n) \frac r{2n} > 0.125 r > d. $$
Furthermore, a straightforward computation confirms that \refe{BCH-d} yields
$e\le 2^{\lcl m/2\rcl-1}$, which is known to imply that the dimension of the
code under consideration is $em\ge d$. The result now follows by applying
\refe{gamma-upper-generic} and observing that, by \refe{BCH-n} and
\refe{BCH-mlarge},
  $$ \frac{dr}{m\sqrt n} = \frac nm \frac{dr}{n^{3/2}}
                                           < 3n \frac{\log_2(dr)}m \le 12n, $$
whence
\begin{align*}
  \lpr 1-\frac\mu{n}\rpr r +n+d-\mu
       &<   \lpr 1-\frac\mu{n}\rpr r + 2n \\
       &\le \lpr 0.5+\frac{e-1}{\sqrt n}\rpr r + 2n \\
       &< 0.5 r + \lpr \frac{dr}{m\sqrt n} + 2n \rpr \\
       &<   0.5r + 14n.
\end{align*}
\end{proof}

\begin{proof}[Proof of Theorem \reft{beta-lower}]
Observing that
  $$ (2d-\rho)\lfl\frac{r}{2d}\rfl + \rho \lpr\lfl\frac{r}{2d}\rfl+1\rpr
                              = \lfl\frac{r}{2d}\rfl \cdot 2d + \rho = r, $$
choose subspaces $V_1\longc V_{2d}\le\Fr$ with $\Fr=V_1\oplus\dotsb\oplus
V_{2d}$ so that
  $$ \dim V_1 \longe \dim V_{2d-\rho} = \lfl\frac{r}{2d}\rfl $$
and
  $$ \dim V_{2d-\rho+1} \longe \dim V_{2d} = \lfl\frac{r}{2d}\rfl + 1. $$
In every subspace $V_i$ fix a basis $\fe_i$. For $v\in\Fr$ let $\supp v$
denote the support of $v$ with respect to the union of the bases $\fe_i$, and
for each $i\in[1,2d]$ let $\supp_i(v):=\supp(v)\cap\fe_i$. Also, let
$w(v)=|\supp(v)|$ and $w_i(v):=\supp_i(v)$; that is, $w(v)$ is the weight of
$v$ with respect to the union of the bases $\fe_i$, and $w_i(v)$ is the
contribution of $\fe_i$ to $w(v)$, so that $w=w_1\longp w_{2d}$. Finally, set
  $$ B := \{ v\in\Fr
        \colon w(v)=d\ \text{ and } w_1(v)\longc w_{2d}(v) \le 1 \}; $$
thus,
  $$ |B| = \sum_\sub{0\le i\le 2d-\rho,\,0\le j\le\rho \\ i+j=d}
          \binom{2d-\rho}{i} \binom{\rho}{j}
                 \lfl\frac{r}{2d}\rfl^i \lpr\lfl\frac{r}{2d}\rfl+1\rpr^j, $$
and we show that through every $v\in\Fr$ passes a flat $F_v$ of co-dimension
at most $d$, disjoint with $B\stm\{v\}$. We distinguish three cases.

If $v\in B$, then we let
  $$ F_v := \{ u\in\Fr\colon \supp v\seq\supp u \}. $$
Evidently, we have $v\in F_v$ and $\codim F_v=w(v)=d$; moreover, if
 $u\in F_v\stm\{v\}$, then $w(u)>w(v)=d$, implying $u\notin B$.

If there exists $i\in[1,2d]$ with $w_i(v)\ge 2$, then we choose
$E\seq\supp_i(v)$ with $|E|=2$ and set
  $$ F_v := \{ u\in\Fr\colon E\seq\supp(u) \}. $$
We have $v\in F_v,\ F_v\cap B=\est$, and $\codim F_v=2$.

Finally, if $v\notin B$ and $w_i(v)\le 1$ for each $i\in[1,2d]$, then there
exists $I\seq[1,2d]$ with $|I|=d+1$ such that for all $i\in I$, the weights
$w_i(v)$ are equal to each other. In this case we take $F_v$ to be the
co-$d$-flat (actually, a co-$d$-subspace) consisting of those $u\in\Fr$ with
the property that for all $i\in I$, the weights $w_i(u)$ are of the same
parity. It is immediately verified that $v\in F_v$ and $F_v\cap B=\est$.
\end{proof}

\begin{proof}[Proof of Theorem \reft{beta-lowerprime}]
Choose subspaces $V_1\longc V_d\le\F_r$ with
 $\Fr=V_1\oplus\dotsb\oplus V_d$ so that
  $$ \dim V_1\longe\dim V_{d-\rho} = \lfl\frac{r}{d}\rfl $$
and
  $$ \dim V_{d-\rho+1}\longe\dim V_d = \lfl\frac{r}{d}\rfl + 1. $$
In every subspace $V_i$ fix a basis $\fe_i$, and define the sets
 $\supp,\ \supp_i$, and the weight functions $w$ and $w_i$ as in the proof of
Theorem~\reft{beta-lower}. Let
  $$ B := \{ v\in\Fr\colon w_i(v)\le 1,\ i\in[1,d] \}; $$
thus,
  $$ |B| = \lpr \lfl\frac{r}{d}\rfl+1 \rpr^{d-\rho}
                                    \lpr \lfl\frac{r}{d}\rfl+2\rpr^{\rho}, $$
and we claim that through every $v\in\Fr$ passes a flat $F_v$ of co-dimension
at most $d$, disjoint with $B\stm\{v\}$. To show this we distinguish two
cases, according to whether $v\in B$ or $v\notin B$.

If there exists $i\in[1,d]$ with $w_i(v)>1$ (that is, $v\notin B$), then we
choose $E\seq\supp_i(v)$ with $|E|=2$, and set
  $$ F_v := \{ u\in\Fr\colon E\seq\supp(u) \}. $$
Clearly, this is a flat of co-dimension $2$, disjoint with $B$ and passing
through $v$.

If, on the other hand, we have $w_i(v)\le 1$ for each $i\in[1,d]$ (that is,
$v\in B$), then we consider the partition $[1,d]=I_0\cup I_1$ with
  $$ I_\nu := \{ i\in[1,d]\colon w_i(v)=\nu \};\quad \nu\in\{0,1\} $$
and let $F_v$ be the co-$d$-flat consisting of those vectors $u\in\Fr$ with
the property that $w_i(u)$ is even for each $i\in I_0$, and
 $\supp_i v\seq\supp_i u$ for each $i\in I_1$. It is immediately verified
that $F_v\cap B=\{v\}$.
\end{proof}

\begin{proof}[Proof of Theorem \reft{beta-lower2}]
Letting $K:=r-(r_1\longp r_k),\ r_{k+1}\longe r_{k+K}=1$, and
 $d_{k+1}\longe d_{k+K}=0$, we see that $r_1\longp r_k=r$ can be assumed without
loss of generality. With this extra assumption, we choose subspaces
$V_1\longc V_k\le\Fr$ so that $\Fr=V_1\oplus\dotsb\oplus V_k$ and $\dim
V_i=r_i$ for $i=1\longc k$, in every subspace $V_i$ fix a basis $\fe_i$, and
define $\supp,\ \supp_i,\ w$, and $w_i$ as in the proofs of
Theorems~\reft{beta-lower} and~\reft{beta-lowerprime}. Finally, we set
  $$ B := \{ v\in\Fr \colon w_i(v)=d_i,\ i\in[1,k] \}; $$
thus,
  $$ |B| = \binom{r_1}{d_1}\dotsb\binom{r_k}{d_k}, $$
and we claim that through every $v\in\Fr$ passes a flat $F_v$ of co-dimension
at most $d$, disjoint with $B\stm\{v\}$. To show this we distinguish several
cases: the case where $v\in B$, that where
 $w_i(v)\ge d_i+1$ for some $i\in[1,k]$, and that where $w_i(v)\le d_i-1$
for some $i\in[1,k]$, with the last two cases further splitting into two
subcases each.

If $v\in B$, then we take $F_v:=\{u\in\Fr\colon \supp v\seq\supp u\}$.
Clearly, $F_v\cap B=\{v\}$, and the co-dimension of $F_v$ is
 $d_1\longp d_k\le d$.

If there exists $i\in[1,k]$ with $w_i(v)\ge d_i+1$, then we find
$E\seq\supp_i(v)$ with $|E|=d_i+1$, and set
\begin{align*}
  F_v &:= \{ u\in\Fr \colon E\seq\supp(u) \}\quad \text{if}\ d_i<d,
\intertext{and}
  F_v &:= \{ u\in\Fr \colon E\seq\supp(u)
                   \ \text{or}\ E\cap\supp(u)=\est \}\quad \text{if}\ d_i=d.
\end{align*}
Clearly, we have $v\in F_v$, and
  $$ \codim F_v = \begin{cases}
       |E| = d_i+1 \le d &\ \text{if}\ d_i<d, \\
       |E|-1=d_i=d &\ \text{if}\ d_i=d.
                  \end{cases} $$
Furthermore, we have $F_v\cap B=\est$: for, if $E\seq\supp(u)$, then
$w_i(u)\ge|E|>d_i$, and if $E\cap\supp(u)=\est$ and $d_i=d$, then
  $$ w_i(u)\le r_i-|E| \le (d_i+d)-(d_i+1) < d = d_i. $$

In a similar way we treat the situation where $w_i(v)\le d_i-1$ for some
$i\in[1,k]$. In this case we find a set $E\seq\fe_i\stm\supp_i(v)$ with
$|E|=r_i-d_i+1$, and let
\begin{align*}
  F_v &:= \{ u\in\Fr \colon E\cap\supp(u)=\est \}\quad \text{if}\ r_i<d_i+d,
\intertext{and}
  F_v &:= \{ u\in\Fr \colon E\cap\supp(u)=\est
                \ \text{or}\ E\seq\supp(u) \}\quad \text{if}\ r_i=d_i+d.
\end{align*}
Thus, $v\in F_v$, the co-dimension of $F_v$ is
  $$ \codim F_v = \begin{cases}
       |E| = r_i-d_i+1 \le d &\ \text{if}\ r_i<d_i+d, \\
       |E|-1=r_i-d_i=d &\ \text{if}\ r_i=d_i+d,
                  \end{cases} $$
and $F_v$ is disjoint with $B$: for, if $\supp(u)\cap E=\est$, then
$w_i(u)\le r_i-|E|=d_i-1<d_i$, and if $E\seq\supp(u)$ and $r_i=d_i+d$, then
$w_i(u)\ge|E|=d+1>d_i$, implying $u\notin B$ in both cases.
\end{proof}

\begin{proof}[Proof of Corollary \refc{rk}]
Let
  $$  k:=\lfl\frac{r}{d}\rfl,\ d_1:=\lfl\frac{d} k\rfl,
                             \ \text{and}\ \ r_1:=\lfl\frac{d_1}{d}\,r\rfl; $$
thus,
\begin{equation}\label{e:kappabounds}
  2\le  k < d
\end{equation}
(as $2\le\frac rd<d$),
and
\begin{equation}\label{e:d1r1large}
  \frac dr \le \frac{d_1}{r_1} \le \frac12
\end{equation}
(the first inequality following from $r_1\le\frac{d_1}{d}\,r$, the second
from $\frac{d_1}{d}\,r\ge 2d_1$). Observing also that
  $$ kd_1\le d,\ kr_1 \le kd_1\frac rd\le r, $$
and
  $$ r_1-d_1 = \lfl \lpr \frac{r}{d}\,-1\rpr d_1\rfl \le  k d_1 \le d, $$
we apply Theorem~\reft{beta-lower2} with $r_2\longe r_k=r_1$ and
 $d_2\longe d_k=d_1$ to get
  $$ \bet_r(d) \ge \binom{r_1}{d_1}^ k. $$
Consequently, \refe{binomial} yields
\begin{equation}\label{e:betmp1}
  \ln \bet_r(d) \ge k r_1 H(d_1/r_1)-( k/2)\ln(2r_1).
\end{equation}

Now from
\begin{multline*}
  r_1+ k \ge \lfl \frac rd+\frac{d_1}d\,r\rfl - 1
                        = \lfl(d_1+1) \frac rd \rfl - 1
                             \ge \lfl \frac{d+1} k\,\frac rd \rfl -1 \\
  = \lfl \frac r k + \frac r{ k d} \rfl - 1
      > \frac r k + \frac r{ k d} - 2 \ge \frac r k - 1
\end{multline*}
and \refe{kappabounds} we deduce
 $$  k r_1 > r- k^2- k \ge r-(3/2) k^2, $$
from \refe{d1r1large} and the fact that $H$ is increasing on $[0,1/2]$ we
conclude that
  $$ H(d_1/r_1) \ge H(d/r), $$
and $r_1\le \frac{d_1}d\,r\le\frac r k$ along with \refe{kappabounds} gives
$2r_1\le r$. Combining these observations with \refe{betmp1} we obtain
  $$ \ln \bet_r(d) > (r-(3/2) k^2)H(d/r)-( k/2)\ln r. $$

To derive the first assertion of the corollary we now notice that the
inequality
  $$ H(t) \le t\ln(e/t),\ t\in[0,1] $$
gives
  $$ (3/2)  k^2H(d/r)+( k/2)\ln r \le \frac32\, \frac rd \ln\frac{er}d
                                + \frac12\, \frac rd\ln r < 2\frac rd\ln r $$
since $d\ge 3$ by \refe{kappabounds}. For the second assertion just observe
that if $d/\sqrt r\to\infty$, then $rH(d/r)\ge d\ln\frac rd$ whereas
 $\frac rd\ln r=o(d\ln\frac rd)$, and use \refe{binomial}.
\end{proof}

\vfill

\bigskip


\begin{thebibliography}{CHLL97}

\bibitem[CHLL97]{b:chll}
  {\sc G.~Cohen, I.~Honkala, S.~Litsyn}, and {\sc A.~Lobstein},
  \emph{Covering codes}.
  North-Holland Mathematical Library {\bf 54}. North-Holland Publishing Co.,
  Amsterdam, 1997.
\bibitem[D09]{b:d}
  {\sc Z.~Dvir},
  On the size of Kakeya sets in finite fields,
  \emph{J.~Amer.~Math.~Soc.} {\bf 22} (4)  (2009),  1093--1097.
\bibitem[McWS77]{b:mcwsl}
  {\sc F.J. MacWilliams and N.J.A. Sloane},
  \emph{The Theory of Error-Correcting Codes},
  North-Holland (1977).
\bibitem[vL98]{b:vl}
  {\sc J.H. van Lint},
  \emph{An Introduction to Coding Theory},
  Third edition, Springer Verlag (1998).
\end{thebibliography}
\end{document}